\theoremstyle{definition}
\newtheorem{definition}{Definition}
\newtheorem{remark}{Remark}
\newtheorem{theorem}{Theorem}
\newtheorem{lemma}{Lemma}
\DeclareMathOperator\supp{supp}
\DeclareMathOperator\rr{\mathbb{R}}
\DeclareMathOperator\nn{\mathbb{N}}
\newcommand\addtag{\refstepcounter{equation}\tag{\theequation}}
\newcommand{\Hmm}[1]{\leavevmode{\marginpar{\tiny%
$\hbox to 0mm{\hspace*{-0.5mm}$\leftarrow$\hss}%
\vcenter{\vrule depth 0.1mm height 0.1mm width \the\marginparwidth}%
\hbox to
0mm{\hss$\rightarrow$\hspace*{-0.5mm}}$\\\relax\raggedright #1}}}
\begin{document}

\title{On Stein's extension operator preserving  Sobolev-Morrey spaces }

\author{Pier Domenico Lamberti and Ivan Yuri Violo}

\date{}

\maketitle

\begin{abstract}  We prove that Stein's Extension Operator preserves Sobolev-Mor\-rey spaces, that is  spaces of functions with 
	weak derivatives in Morrey spaces. The analysis concerns classical and ge\-ne\-ra\-li\-zed  Mor\-rey spaces on bounded and unbounded domains with Lipschitz boundaries in the $n$-dimensional  Euclidean space.  
	\end{abstract}

{\bf Keywords}: Extension operator, Lipschitz domains, So\-bo\-lev and Mor\-rey spa\-ces\\
{\bf 2010 Mathematics Subject Classification}: 46E35,  46E30, 42B35\\

\section{Introduction}

One of the fundamental tools in the theory of Sobolev Spaces and their applications to partial differential equations is Stein's  Extension operator
which allows to extend functions defined on a Lipschitz domain (i.e., a connected open set with Lipschitz continuous boundary)   $\Omega \subset \rr^n $, $n\geq 2$,  to the whole ambient space  $\rr^n$ preserving smoothness and summability. 
Namely, in 1967  E. Stein~\cite{steinorsay} defined a linear continuous  operator
$T$ from the  Sobolev space $W^{l,p}(\Omega )$ to  the Sobolev space $W^{l,p}(\rr^n )$ such that $Tf_{_{|\Omega} } =f$  for all $f\in W^{l,p}(\Omega ) $, see also \cite{stein}.
It is important to observe that  Stein's  Extension operator is universal in the sense that   the definition of $Tf$ is given by means of a formula which is independent of $l\in \nn$ and $p\in [1,\infty ]$ and  includes  the limiting cases $p=1,\infty$. 
That formula can be regarded as an integral version of another classical extension formula found by M.R. Hestenes\cite{hestenes}  in 1941  based on a linear combination of  a finite number of suitable reflections and which can be used  for the simpler case of domains of class $C^l$.  Loosely speaking,  Stein's formula involves an infinite number of reflections and this fact gives to Stein's Extension Operator a  global nature in the sense that the value of $Tf$ at a point $x\in\rr^n\setminus  \Omega$ depends on all values of $f$ along a line in $\Omega$, see \eqref{defT2}.  
Another extension operator was proposed by V.I. Burenkov\cite {burpaper2} in 1975, see also  \cite{burenkov2, burenkov}. Burenkov's Extension Operator is not universal since it depends on  $l\in \nn$. 
However, it has local nature in the sense that the values of $Tf$ around any point  $x\in\rr^n\setminus  \Omega$ depend on the values of $f$ around a finite number of reflected points.  This gives Burenkov's Extension Operator some flexibility and it  allows to treat   the case of domains of class $C^{0,\gamma}$ with $0<\gamma \le 1$ and domains with merely continuous boundaries  (with deterioration of smoothness of the extended functions), and anisotropic Sobolev spaces as well.  Such a  local feature was recently exploited in \cite{fanciullolamberti}  to prove that 
Burenkov's Extension Operator  preserves  Sobolev-Morrey spaces. More precisely,  
given $p\in [1,\infty [$,  a function $\phi$  from $\rr^+$ to $\rr^+$ and $\delta \in ]0, \infty ]$ one defines  the (generalized)  Morrey norm  of a function  $f \in L^p_{loc}(\Omega)$  by 
\begin{equation}
\label{morreynorm}
\| f\|_{M^{\phi,\delta}_p(\Omega)} := \sup_{x \in \Omega,\, 0<r<\delta} \left(  \frac{1}{\phi(r)}\int_{B_r(x)\cap \Omega} |f(y)|^p dy \right )^{\frac{1}{p}},
\end{equation}
and  simply  writes  $\| f\|_{M^{\phi}_p(\Omega)} $ if $\delta =\infty$.   The Morrey space $M^{\phi , \delta }_p(\Omega ) $ is the space of functions 
$f\in L^p_{loc}(\Omega )$ such that $\| f \|_{M^{\phi , \delta }_p(\Omega ) }<\infty $.  	Note that  if $\phi(r)=r^{\gamma}$ with $\gamma \geq 0$, then $M^{\phi }_p(\Omega ) $ are  the classical Morrey spaces introduced by C.B. Morrey~\cite{morrey} in 1938 and  also denoted by  $M^{\gamma}_p(\Omega) $  (obviously,  if $\gamma=0$ then $M_p^\phi(\Omega)=L^p(\Omega)$,  if $\gamma=n$
then 
$M_p^\phi(\Omega)=L^\infty(\Omega)$ and if  $\gamma>n$ then $M_p^\phi(\Omega)$ contains only the zero function).

It is proved in \cite{fanciullolamberti} that Burenkov's Extension Operator satisfies the following estimate
\begin{equation}
\| D^\alpha Tf\|_{M_p^{\phi,\delta}(\rr^n)}\le C\sum_{|\beta|\le |\alpha|}\|D^\beta f \|_{M_p^{\phi,\delta}(\Omega)}\, , \label{Sbound}
\end{equation}
for all  $f\in W^{l,p}(\Omega)$ and $|\alpha|\le l$, where $C>0$ is independent of $f$.   Moreover,  it is also proved  that if $\Omega $ is a bounded or an elementary/special unbounded domain,  then $C$ can be chosen to be independent of $\delta$ in which case estimate  $(\ref{Sbound})$ holds also if $\delta =\infty $. 
In particular, if $f\in W^{l,p}(\Omega )$ is such that $D^{\alpha }f\in M^{\phi , \delta }_p(\Omega )$ for all $|\alpha |\le l$ then $D^{\alpha }Tf\in  M^{\phi , \delta }_p(\rr^n)$ for all $|\alpha |\le l$.

Given the importance of Stein's Extension Operator and its wide use in mathematical analysis and  applications, it is clearly of interest to explore its fine properties as it has been done for Burenkov's Extension Operator.

In the present paper, we prove that also  Stein's operator satisfies estimate \eqref{Sbound}, hence it  preserves Sobolev-Morrey spaces.  We note that  although one usually expects that  an operator 
defined by a nice formula enjoys nice properties, the proof of our main result is not straightforward, the main obstruction being represented by the fact that, as we have said,  Stein's operator has a  global nature while  Morrey norms  have   a somewhat local genesis.

Needless to remark  the importance of  Morrey spaces. For example, they have been extensively used in  the study of  the local behaviour of solutions to elliptic and parabolic differential equations, see e.g., the survey papers \cite{lemarie, ragusa}. Moreover, they are the object of current research and many results have been recently obtained  in connection with the theory of singular integral operators,  and interpolation theory as well, see e.g., 
\cite{burenkov3, interpolation}.

With reference to the problem of  the extension  of  functions in Sobolev-Morrey spaces, we quote the paper \cite{vitolo} which is concerned with the case $l=1$: in that case Stein's operator is not required since the extension operator is provided by one reflection.  Moreover, we refer to \cite{koskela} for a description of extension domains for   certain Sobolev-Morrey spaces in the case $l=1$, $1\le p<n$,  $\phi(r)=r^{n-p}$. Finally, we refer to 
\cite{feffer, shva16, shva17} and the references therein for recent advances in the theory of extension operators. 

The main result of the present paper  is Theorem~\ref{Tlemma} which concerns special Lipschitz domains defined as epigraphs of Lipschitz continuous functions. Theorem~\ref{mains} is devoted to the general case. à

\section{Preliminaries}
\label{prelsec}

In this section we state a few results that will be used in the sequel. 
In particular, for the proof of Theorem~\ref{Tlemma}, we need the  Hardy-type inequality \eqref{hardy2}. Although there is a vast literature concerning Hardy and Hardy-type inequalities,  we include a proof for the specific case that we need  for the convenience of the reader.  We note that  setting  $a=c=0$ and $b=d=\infty$  in \eqref{hardy2}  gives the classical Hardy's Inequality
\[ \left ( \int_0^\infty \left(  \int_{x}^{\infty}x^\beta f(y)dy\right)^pdx \right)^{\frac{1}{p}}\le 
\frac{p}{\beta p +1} \left (\int_{0}^{\infty} (f(x)x^{\beta+1})^pdx\right)^{\frac{1}{p}},\]
for $\beta>-1/p.$

\begin{lemma}[Hardy-type inequality]\label{hardy}
	Let $\beta \in \rr$, $a,b,c,d \in \rr^+$ with $a<b$ and $c<d$ and let $p \in [1,\infty)$. Moreover, let $f$ be a non-negative measurable function in $(0,\infty)$. Then the following inequality holds
	\begin{equation}
	\left ( \int_a^b \left(  \int_{x+c}^{x+d}x^\beta f(y)dy\right)^pdx \right)^{\frac{1}{p}}\le 
	C \left (\int_{a+c}^{b+d} (f(x)x^{\beta+1})^pdx\right)^{\frac{1}{p}}, \label{hardy2}
	\end{equation}
	where
	$C=\int_{1+\frac{c}{b}}^{1+\frac{d}{a}}t^{-(\beta+1+1/p)}dt.$
\end{lemma}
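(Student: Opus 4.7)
The plan is to reduce the inequality to a one-dimensional weighted estimate by a change of variables that turns the shifted interval $[x+c, x+d]$ into the dilated interval $[1+c/x, 1+d/x]$, and then swap the order of integration via Minkowski's integral inequality.

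First I would substitute $y = xt$ in the inner integral, obtaining
\[
\int_{x+c}^{x+d} x^\beta f(y)\, dy = \int_{1+c/x}^{1+d/x} x^{\beta+1} f(xt)\, dt.
\]
Because $a \le x \le b$, the $t$-interval satisfies $[1+c/x,\, 1+d/x] \subseteq [1+c/b,\, 1+d/a]$, and since $f \ge 0$ the inner integral is dominated by $\int_{1+c/b}^{1+d/a} x^{\beta+1} f(xt)\, dt$. Next I would apply the generalized Minkowski integral inequality to exchange the order of $x$- and $t$-integration, yielding
\[
\left(\int_a^b \Bigl(\int_{1+c/b}^{1+d/a} x^{\beta+1} f(xt)\, dt\Bigr)^{\!p} dx\right)^{\!1/p} \le \int_{1+c/b}^{1+d/a} \left(\int_a^b x^{(\beta+1)p} f(xt)^p\, dx\right)^{\!1/p} dt.
\]

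The next step is to undo the substitution in the inner $x$-integral with $t$ fixed, which gives $\int_a^b x^{(\beta+1)p} f(xt)^p\, dx = t^{-(\beta+1)p - 1} \int_{at}^{bt} y^{(\beta+1)p} f(y)^p\, dy$, so that after taking $p$-th roots a factor $t^{-(\beta+1)-1/p}$ emerges, which is precisely the integrand of $C$. To close the estimate I would observe that the left-hand side of \eqref{hardy2} only involves $f$ on $[a+c, b+d]$, so without loss of generality $f$ may be assumed to vanish outside this interval, after which $\int_{at}^{bt} y^{(\beta+1)p} f(y)^p\, dy \le \int_{a+c}^{b+d} y^{(\beta+1)p} f(y)^p\, dy$, and the desired inequality follows.

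The only real obstacle is bookkeeping: one must verify that the enlargement $[1+c/x, 1+d/x] \subseteq [1+c/b, 1+d/a]$ is valid throughout the range $x \in [a,b]$, and that the back-substitution delivers exactly the constant $C$ written in the statement. Both checks are elementary consequences of $a<b$, $c<d$, and the monotonicity of the maps $x \mapsto c/x$ and $x \mapsto d/x$; note in particular that the restriction $a>0$ is essential, since otherwise the upper limit $1 + d/a$ in the definition of $C$ would be infinite.
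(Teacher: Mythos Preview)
Your proposal is correct and follows essentially the same route as the paper: substitute $y=xt$, enlarge the $t$-range to $[1+c/b,\,1+d/a]$, apply Minkowski's integral inequality, and back-substitute to extract the factor $t^{-(\beta+1+1/p)}$. The only cosmetic difference is that the paper tracks the true integration region via a characteristic function $\chi_A$ and then uses the inclusion $A\subset B=\{(t,x):a+c\le tx\le b+d\}$ to land directly on $\int_{a+c}^{b+d}$, whereas you achieve the same restriction by the (equally valid) WLOG assumption that $f$ vanishes outside $[a+c,b+d]$.
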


\begin{proof}
	Applying the change of variable $y=tx$ in the inner integral of the left hand side of  \eqref{hardy2}   we get 
	\[
	\left ( \int_a^b \left(  \int_{x+c}^{x+d}x^\beta f(y)dy\right)^pdx \right)^{\frac{1}{p}} =
	\left ( \int_a^b \left(  \int_{1+c/x}^{1+d/x}x^{\beta+1} f(tx)dt\right)^pdx \right)^{\frac{1}{p}}
	\]
	that can be rewritten as
	\[ \left ( \int_a^b \left(  \int_{1+c/b}^{1+d/a}x^{\beta+1} \chi_A (t,x)f(tx)dt\right)^pdx \right)^{\frac{1}{p}},\]
	where  $A=\{ (t,x)\in {\mathbb{R}}^2 \ | \ 1+c/x\le t \le 1+d/x,\  x \in [a,b]\}.$  As customary, $\chi_C$ denotes 
	the characteristic function of a set $C$.
	Applying Minkowski's Integral Inequality yields
	\begin{eqnarray}\lefteqn{
		\left ( \int_a^b \left(  \int_{1+c/b}^{1+d/a}x^{\beta+1}  \chi_A (t,x)f(tx)dt\right)^pdx \right)^{\frac{1}{p}}  }\nonumber \\
	& & \qquad\qquad\qquad
	\le 
	\int_{1+c/b}^{1+d/a} \left ( \int_a^b \left( x^{\beta+1}  \chi_A (t,x)f(tx) \right)^p dx \right)^{\frac{1}{p}} dt. 
	\end{eqnarray}
	Let  $B=\{ (t,x) \in \rr^2 \ | \ a+c\le tx \le b+d\} $. 
	By observing that
	$A\subset B$, hence  $ \chi_A\le  \chi_B$, and by  applying the change of variables $u=tx$, we get
	\begin{eqnarray}\lefteqn{
		\int_{1+c/b}^{1+d/a} \left ( \int_a^b \left( x^{\beta+1} \chi_A(t,x)f(tx) \right)^p dx \right)^{\frac{1}{p}} dt} \nonumber \\
	& & \qquad
	\le
	\int_{1+c/b}^{1+d/a} \left ( \int_a^b \left( x^{\beta+1}\chi_B(t,x)f(tx) \right)^p dx \right)^{\frac{1}{p}}dt  \label{XY}  \nonumber \\
	& & \qquad =\int_{1+c/b}^{1+d/a} t^{-(\beta+1+1/p)}dt\left ( \int_{a+c}^{b+d} \left( u^{\beta+1} f(u) \right)^p du \right)^{\frac{1}{p}},
	\end{eqnarray}
	that is what we wanted to prove.
\end{proof}

Moreover, we shall use the following two lemmas the proofs of which are easy and are omitted.
Here and in the sequel $\nn_0$ denotes the set $\nn \cup \{ 0\}$. Furthermore, the elements of $\rr^n$ are denoted by $x=(\bar x, y)$ with $\bar x \in \rr^{n-1}$, $y \in \rr$, and it is always assumed $n\ge2.$ 

\begin{lemma}\label{derivatives}
	Let $f,h \in C^\infty(\rr^n)$, $\lambda \in \rr\setminus\{0\}$. Let $g\in C^\infty(\rr^n)$ be defined by $g(x)=f(\bar x, y+\lambda h(x))$ for all $x=(\bar x,y) \in \rr^{n}$. Then, for every $\alpha \in \nn_0^n$ and $x \in \rr^n$, $D^\alpha g(x)$ is a finite sum of terms of the following form
	\[
	c\lambda^s D^{\beta} f(\bar x, y+\lambda h(x))(D^{\gamma_1}h(x))^{n_1}\cdots (D^{\gamma_k}h(x))^{n_k}
	\]
	for some constant $c$, with $\beta,\gamma_i \in \nn_0^n $, $k,s,n_i \in \nn_0$ and $\beta,\gamma_i\neq0$, $k,s\ge 0$, $n_i>0$. It is meant that for $k=0$ no term $(D^{\gamma_i}h(x))^{n_i}$ is present. Moreover every term satisfies the following conditions
	\begin{enumerate}[a)]
		\item $
		\sum_{i=1}^k
		 n_i(|\gamma_i|-1)=|\alpha|-|\beta|$,
		\item  $s=0$ if and only if $k=0$.
	\end{enumerate}
\end{lemma}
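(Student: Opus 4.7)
The plan is to argue by induction on $|\alpha|$, since the statement is essentially the Faà di Bruno chain rule specialized to $g(x) = f(\bar x, y + \lambda h(x))$. For the base case $|\alpha|=1$, I would simply compute $\partial_{x_j} g$ for each $j$: if $j<n$ this yields $\partial_{x_j} f(\bar x, y+\lambda h(x)) + \lambda \partial_y f(\bar x, y+\lambda h(x)) \partial_{x_j} h(x)$, and if $j=n$ it yields $\partial_y f(\bar x, y+\lambda h(x))(1+\lambda \partial_y h(x))$. Each summand has the required form, with either $(k,s)=(0,0)$ or $(k,s)=(1,1)$, and conditions a), b) are immediate.

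For the inductive step, assume the claim for all multi-indices of length $m$, write $D^\alpha = \partial_{x_j} D^{\alpha'}$ with $|\alpha'|=m$, and apply $\partial_{x_j}$ to a generic term $c\lambda^s D^\beta f(\bar x, y+\lambda h(x)) \prod_{i=1}^k (D^{\gamma_i} h(x))^{n_i}$ by the product rule. Three kinds of new summands arise. First, differentiating $D^\beta f$ in an $\bar x_j$-slot (when $j<n$) replaces $\beta$ by $\beta+e_j$; both $|\alpha|$ and $|\beta|$ grow by one, so a) is preserved and $(k,s)$ is untouched. Second, differentiating $D^\beta f$ through the composition with $y+\lambda h$ sends $\beta$ to $\beta+e_n$, increases $s$ by one, and inserts a new factor $D^{e_j} h$ (either as a fresh pair $(\gamma_{k+1},n_{k+1})=(e_j,1)$ or by incrementing the $n_i$ of an already-present $\gamma_i=e_j$); since $|e_j|-1=0$, condition a) is preserved, and both $s$ and $k$ become positive if they were zero. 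Third, differentiating a factor $(D^{\gamma_i} h)^{n_i}$ decreases $n_i$ by one and introduces a single new factor $D^{\gamma_i+e_j} h$; the contribution to $\sum n_i(|\gamma_i|-1)$ changes by $(n_i-1)(|\gamma_i|-1) + (|\gamma_i+e_j|-1) - n_i(|\gamma_i|-1) = 1$, exactly matching the unit growth of $|\alpha|-|\beta|$.

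Condition b) is also preserved throughout: only the second case can move $(k,s)$ away from $(0,0)$, and it does so by incrementing both simultaneously. The main obstacle is nothing analytic but rather the combinatorial bookkeeping of the multi-indices in case three; once the identity above is in place, the induction closes and the lemma reduces to a careful application of the product and chain rules. (The statement is understood for $|\alpha|\ge 1$, since for $\alpha=0$ no term with $\beta\neq 0$ can represent $g$ itself.)
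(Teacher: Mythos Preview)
Your inductive argument via the chain and product rules is correct and is precisely the routine verification the authors have in mind: the paper omits the proof entirely, stating it is ``easy.'' One small point worth making explicit in your inductive step: when $j=n$, differentiating $D^\beta f(\bar x,\,y+\lambda h(x))$ with respect to $x_n$ produces, besides your case-2 term $\lambda\, D^{\beta+e_n} f\cdot \partial_{x_n} h$, an additional term $D^{\beta+e_n} f$ coming from the ``$1$'' in $\partial_{x_n}(y+\lambda h)=1+\lambda\,\partial_{x_n}h$; this extra term behaves exactly like your case~1 (only $|\beta|$ increases, $(k,s)$ is untouched) and preserves a), b) trivially. Your closing parenthetical that the statement is to be read for $|\alpha|\ge 1$ is also apt, since the constraint $\beta\neq 0$ cannot be met when $\alpha=0$.
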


\begin{lemma}\label{covering}
	Let $\Omega$ be a set in $\rr^n$ with diameter $D>0$ and let $k \in \nn$. Then there exists  $C_{n,k}\in {\mathbb{N}}$ depending only on $k$ and $n$ such that $\Omega$ can be covered by a collection of open balls $B_1,...,B_h$ centered in $\Omega$ with radius $D/k$ and $h \le C_{k,n}.$
\end{lemma}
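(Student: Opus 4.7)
The plan is to construct the covering greedily and then bound its cardinality by a packing (volume-comparison) argument, obtaining an explicit admissible value of $C_{k,n}$.

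First, fix an arbitrary point $x_0\in\Omega$. Since the diameter of $\Omega$ equals $D$, one has $\Omega\subset\overline{B_D(x_0)}$. Next, define points $x_1,x_2,\ldots\in\Omega$ inductively by setting $x_1=x_0$ and, having chosen $x_1,\ldots,x_{i-1}$, stopping if $\Omega\subset\bigcup_{j=1}^{i-1}B_{D/k}(x_j)$ and otherwise picking $x_i\in\Omega\setminus\bigcup_{j=1}^{i-1}B_{D/k}(x_j)$. By construction $|x_i-x_j|\ge D/k$ for all surviving indices $i\neq j$, so the open balls $B_{D/(2k)}(x_i)$ are pairwise disjoint by the triangle inequality.

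Second, since each $x_i\in\overline{B_D(x_0)}$, the smaller balls satisfy $B_{D/(2k)}(x_i)\subset B_{D+D/(2k)}(x_0)$. Comparing Lebesgue measures of the disjoint union with the containing ball yields
\[
h\cdot \omega_n\left(\tfrac{D}{2k}\right)^{n} \le \omega_n\left(D+\tfrac{D}{2k}\right)^{n},
\]
where $\omega_n$ denotes the volume of the $n$-dimensional unit ball, whence $h\le(2k+1)^{n}$. In particular the procedure must terminate in at most $(2k+1)^{n}$ steps, producing the desired covering of $\Omega$ by open balls of radius $D/k$ centered in $\Omega$; one may take $C_{k,n}=(2k+1)^{n}$, which depends only on $k$ and $n$.

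There is essentially no obstacle in this argument — it is the classical packing/covering correspondence — and the only subtlety worth noting is the requirement to start from a point $x_0\in\Omega$ (rather than an arbitrary center of an enclosing ball) and to restrict all subsequent selections to $\Omega$, so as to meet the hypothesis that the covering balls be centered in $\Omega$.
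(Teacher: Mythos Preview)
Your argument is correct: the greedy selection produces a $D/k$-separated subset of $\Omega$, the disjoint half-radius balls all sit inside $B_{D+D/(2k)}(x_0)$, and the volume comparison forces termination with $h\le(2k+1)^n$, at which point the stopping criterion guarantees the covering. The paper itself omits the proof of this lemma as easy, so there is nothing to compare against; your packing/volume argument is a standard and fully adequate justification.
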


\section{Stein's operator  on special Lipschitz domains}

In this section we consider  the case of  special Lipschitz  domains   $\Omega$  in $\rr^n$ of the form
\begin{equation} \label{omegaele}
\Omega=\{(\bar x, y) \in \rr^n \ | \ \psi(\bar x)<y \},
\end{equation}
where $\psi : \rr^{n-1} \to \rr$ is a Lipschitz continuous
function.
The Lipschitz constant of $\psi $ will be denoted by $M$ and will be called Lipschitz
bound of $\Omega$. 
Recall that the elements of  $\rr^n$ are denoted by  $x=(\bar x,y)$ with $\bar x\in\rr^{n-1}$ and $y\in \rr$ and that it is always assumed that $n\geq 2$. 

By $\Delta $  we denote a  fixed regularized distance from $\bar{\Omega}$. Namely, $\Delta \in C^{\infty}( \rr^n\setminus \bar{\Omega})$ and satisfies the following properties:
\begin{equation}
\label{rega}
c_1 d(x,\bar{\Omega} )\le \Delta(x)\le c_2d (x,\bar{\Omega} )
\end{equation}
and
\begin{equation}
\label{regb}
\left | D^\alpha \Delta(x) \right | \le B_\alpha d(x,\bar{\Omega} )^{1-|\alpha|},\  {\rm for\ all}\ \alpha \in \nn^n,
\end{equation}
for all $x \in  \rr^ n\setminus \bar{\Omega}$, where $B_\alpha$, $c_1$,$c_2$ are positive constants independent of $x$ and $\Omega $. Here 
$d(x,\bar{\Omega} )$ denotes  the Euclidean distance of  $x \in \rr^ n$ from  $\bar{\Omega} $.  
Moreover,  one can  prove that  
there exists a positive constant $  c_3 $, which depends only on  $M$  such that if $(\bar x, y) \in \rr^ n  \setminus \bar{\Omega} $ then 
\begin{equation} \label{deltabound}
c_3\Delta(\bar x,y)\ge \psi(\bar x)- y.
\end{equation}

We  denote by $\tau $ a fixed continuous real-valued function defined in $[1,\infty)$ satisfying the following properties
\begin{enumerate}[i)]
	\item $\tau(\lambda)=O(\lambda^{-N})$, as $\lambda \rightarrow \infty$ for every $N>0$,
	\item $\int_1^\infty \tau(\lambda)d\lambda=1$, $\int_1^\infty \lambda^k\tau(\lambda)d\lambda=0$, for every $k\in \nn$, $k\geq 1$.
\end{enumerate}

The existence of functions  $\Delta$ and $\tau$ is well-known, see e.g.,  \cite{stein}.

Recall that  if $\Omega$ is an open subset of $\rr^n$,  $W^{l,p}(\Omega)$  denotes the Sobolev space of functions 
 $f\in L^p(\Omega)$ with weak derivatives $D^{\alpha}f\in L^p(\Omega)$  for all  $|\alpha |\le l $, endowed with the norm $\| f\|_{W^{l,p}(\Omega)}=\sum_{0\le |\alpha |\le l}\|D^{\alpha}f\|_{L^p(\Omega)}$.

For an open subset $\Omega$ of $\rr^n$ we will also denote by $C^\infty_b(\bar \Omega)$ the set of functions $f\in C^\infty(\bar \Omega)$ such that $D^\alpha f$ is bounded for all $\alpha \in \nn^n_0$.

We are ready to state the following important  result by Stein.

\begin{theorem}[Stein's Extension Theorem - special case]\label{defT} Let $\Omega$, $\Delta$, $\tau$, $M$ and $c_3$ be as above.  For every function $f \in  C^{\infty }_b(\bar{\Omega})$ , define
	\begin{equation}
	Tf(\bar x, y)= \begin{cases}
	f(\bar x, y), & \text{ if } y\ge\psi(\bar x), \\
	\int_1^\infty f(\bar x, y+ \lambda \delta^*(\bar x,y))\tau(\lambda)d\lambda, & \text{ if } y<\psi(\bar x),		
	\end{cases}
	\label{defT2}\end{equation}
	where $\delta^*(\bar x,y)=2c_3 \Delta(\bar x, y).$ Then $Tf \in C^\infty(\rr^n)$ and  for every $l\in \mathbb{N}$,  $1\le p\le\infty$ we have
	\begin{equation}\label{est}
	\| Tf\|_{W^{l,p}(\rr^n)}\le  S\| f\|_{W^{l,p}(\Omega)} ,
	\end{equation}
	where $ S $ is a constant depending only on $n,l$ and $M.$ Moreover, for every $l\in \mathbb{N}$,  $1\le p\le\infty$,   $T$ admits a unique linear continuous extension from $C^{\infty }_b(\bar{\Omega})\cap W^{l,p}(\Omega)$ to the whole of 
	$W^{l,p}(\Omega)$, taking values in  $W^{l,p}(\rr^n)$ and satisfying estimate \eqref{est}.
\end{theorem}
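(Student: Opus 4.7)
My plan is to follow Stein's original argument in three main steps: verify the smoothness of $Tf$, derive a pointwise bound for $D^\alpha Tf$ outside $\bar\Omega$, and finally establish the $L^p$ estimate \eqref{est} by combining this pointwise bound with the Hardy-type Lemma \ref{hardy} and the vanishing moments of $\tau$.

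On $\Omega$ we have $Tf=f\in C^\infty(\bar\Omega)$, while on $\rr^n\setminus\bar\Omega$ the function $Tf$ is $C^\infty$ by differentiation under the integral sign, which is justified by the rapid decay $\tau(\lambda)=O(\lambda^{-N})$ of property (i), the smoothness of $\delta^*=2c_3\Delta$ there, and the local bounds \eqref{regb} on its derivatives. The matching of derivatives across $\partial\Omega$ (i.e., the continuity of $Tf$ and its derivatives in the appropriate sense) is obtained from the moment conditions (ii): expanding $f$ in Taylor series around a boundary point and integrating against $\tau$, the polynomial terms of positive degree are killed by the vanishing moments and only the remainder contributes, which agrees with the value obtained from the $\Omega$-side.

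For the pointwise bound on $D^\alpha Tf$ with $y<\psi(\bar x)$, I would apply Lemma \ref{derivatives} to $g(x)=f(\bar x, y+\lambda\delta^*(x))$, obtaining a finite sum of terms of the form
\[
c\int_1^\infty \lambda^s\tau(\lambda)\,D^\beta f(\bar x, y+\lambda\delta^*(x))\prod_{i=1}^k (D^{\gamma_i}\delta^*(x))^{n_i}\,d\lambda,
\]
with $|\beta|\le|\alpha|$, $\sum n_i(|\gamma_i|-1)=|\alpha|-|\beta|$, and $s=0$ iff $k=0$. The estimate \eqref{regb} together with condition a) of Lemma \ref{derivatives} gives $\left|\prod (D^{\gamma_i}\delta^*)^{n_i}\right|\le C\, d(x,\bar\Omega)^{-(|\alpha|-|\beta|)}$, while \eqref{rega} and \eqref{deltabound} imply that $d(x,\bar\Omega)$ is comparable to $\psi(\bar x)-y$ with constants depending only on $M$; moreover $(\bar x, y+\lambda\delta^*)\in\Omega$ for all $\lambda\ge 1$. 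For $\beta=\alpha$ the corresponding term is harmless, but for $\beta\ne\alpha$ one has $s\ge 1$ and the singular factor $(\psi(\bar x)-y)^{-(|\alpha|-|\beta|)}$ is not $L^p$-integrable near $\partial\Omega$, so cancellation is required: the moment identities $\int_1^\infty\lambda^{s+k}\tau(\lambda)\,d\lambda=0$ for $k\ge 0$ allow one to subtract inside the integral the Taylor polynomial of $D^\beta f$ at $(\bar x,\psi(\bar x))$ of degree $|\alpha|-|\beta|-1$ in the variable $\lambda\delta^*-(\psi(\bar x)-y)$, whose Taylor remainder absorbs the singular factor and produces a bound in terms of derivatives of $f$ of order at most $|\alpha|$.

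The $L^p$ estimate is then reduced, by Fubini, to a one-dimensional inequality for each fixed $\bar x$: I pull the $\lambda$-integration out via Minkowski's inequality, change to the variable $z=\psi(\bar x)-y>0$, and apply Lemma \ref{hardy}. The comparability $c_1'\lambda z\le \lambda\delta^*\le c_2'\lambda z$ casts the $z$-integral exactly in the template of \eqref{hardy2} after rescaling by $\lambda$, with the $\beta$-parameter of the lemma equal to the negative singular exponent; the rapid decay (i) ensures convergence of the $\lambda$-integral against $\lambda^s\tau(\lambda)$. Integrating over $\bar x$ gives $\|D^\alpha Tf\|_{L^p(\rr^n\setminus\bar\Omega)}\le C\sum_{|\beta|\le|\alpha|}\|D^\beta f\|_{L^p(\Omega)}$ for $1\le p<\infty$, and the $p=\infty$ case follows directly from the pointwise bound. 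The unique continuous extension to all of $W^{l,p}(\Omega)$ is then obtained from the density of $C^\infty_b(\bar\Omega)\cap W^{l,p}(\Omega)$ in $W^{l,p}(\Omega)$ for Lipschitz $\Omega$. I expect the hardest part to be the precise orchestration of Minkowski, the Taylor cancellations furnished by the moments of $\tau$, and Hardy's inequality, since the raw pointwise bound is too singular for term-by-term $L^p$ estimation and the correct sequencing of these tools is essential.
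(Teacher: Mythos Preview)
The paper does not give its own proof of this theorem: immediately after the statement it refers the reader to Stein's book \cite{stein}, and Remark~\ref{remsob} only recalls the mollification procedure (convolution with a bump supported in the cone $\Gamma$) by which $T$ is extended from $C^\infty_b(\bar\Omega)\cap W^{l,p}(\Omega)$ to all of $W^{l,p}(\Omega)$. So there is no in-paper argument to compare against.

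That said, your sketch is a faithful outline of Stein's original proof, and the ingredients you invoke---the chain-rule Lemma~\ref{derivatives}, the Taylor expansion that exploits the vanishing moments (ii) of $\tau$ to kill the singular factor $(\psi(\bar x)-y)^{-(|\alpha|-|\beta|)}$, and a Hardy-type inequality---are exactly those the paper itself deploys in its proof of Theorem~\ref{Tlemma} for the Morrey case. Two small remarks. First, in the paper (and in Stein) the Taylor expansion of $D^\beta f$ is taken about the point $t=\delta^*(\bar x,y)$, i.e.\ about $(\bar x,y+\delta^*)\in\Omega$, rather than about the boundary point $(\bar x,\psi(\bar x))$ you propose; either center works since the moment conditions annihilate all polynomial terms in $\lambda$, but expanding about $y+\delta^*$ makes the remainder bound cleaner. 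Second, for the plain Sobolev estimate \eqref{est} the \emph{classical} Hardy inequality on $(0,\infty)$ (the case $a=c=0$, $b=d=\infty$ of Lemma~\ref{hardy}) is all that is needed; the more elaborate finite-interval version of Lemma~\ref{hardy} is tailored to the localized Morrey norms and is unnecessary here.
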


\begin{remark}\label{remsob} A detailed proof of Theorem~\ref{defT} can be found in \cite{stein}. Since we shall need it later, here we briefly recall the procedure which allows to extend the operator $T$ defined by \eqref{defT2} from $ C^{\infty }_b(\bar{\Omega})\cap W^{l,p}(\Omega)$  to the whole of $W^{l,p}(\Omega)$  when $1\le p<\infty$. 
	We denote by $\Gamma$ the cone with vertex at the origin given by $\Gamma=\{(\bar x, y) \in \rr^n \ | \ M |\bar x|<|y|, y<0 \}$. Suppose now that $\eta \in C^\infty_c(\rr^n)$ is a non-negative function such that $\int_{\rr^n}\eta(x)dx=1$ and its support is contained in $\Gamma.$ For every $f \in W^{l,p}(\Omega)$ and every $\varepsilon>0$ we define
	\[f_\varepsilon(x)=\frac{1}{\varepsilon^n}\int_{\rr^n} f(x-y) \eta(y/\varepsilon)dy =\int_{\rr^n} f(x-\varepsilon y) \eta(y)dy.\]
	Notice that, since the support of $\eta$ is strictly inside $\Gamma$, the above integral is well defined for every $x$ in some neighbourhood of $\bar \Omega$ depending on $\varepsilon$. Hence $f_\varepsilon \in C^\infty_b(\bar\Omega)\cap W^{l,p}(\Omega)$, thus $Tf_\varepsilon$ is well defined. The Stein operator is then taken to be the limit in $W^{l,p}(\rr^n)$ of $Tf_\varepsilon$ as $\varepsilon \to 0.$ 
\end{remark}

In the proof of Theorem~\ref{Tlemma},  it will be convenient to consider a Morrey-type norm defined by means of cubes rather than balls. 
Namely,  given  $1\le p< \infty$,  a function  $\phi$ from $\rr^+$ to $\rr^+$, $\delta>0$ and a domain $\Omega$  in $\mathbb{R}^n$, we set 
\[ \|f\|_{M_{p,Q}^{\phi,\delta}(\Omega)}:=\sup_{x \in \Omega,0<r<\delta} \left(  \frac{1}{\phi(r)}\int_{Q_{2r}(x)\cap \Omega} |f(y)|^p dy \right )^{\frac{1}{p}},\]
for all  $f \in L^p_{loc}(\Omega)$  where $Q_{2r}(x) =\Pi_{k=1}^n]x_k-r,x_k+r [ $ is the open cube centered in $x$ of 
edge length $2r$.
It is easy  to see  that this  norm is equivalent to the norm defined by \eqref{morreynorm} and, in particular, that there exists a positive constant $ c_4 $ depending only on $n$ such that 
\begin{equation}\label{eqnorm} \| .\|_{M_{p}^{\phi,\delta}(\Omega)} \le \| .\|_{M_{p,Q}^{\phi,\delta}(\Omega)}\le c_4 \| .\|_{M_{p}^{\phi,\delta}(\Omega)}\, .
\end{equation}

We are now ready to state and prove the main result of this section.

\begin{theorem}\label{Tlemma}
	Let $1\le p<\infty$, $l\in \mathbb{N}$ and  $\phi$ a function from $\rr^+$ to $\rr^+$. Let  $\Omega$ be a special Lipschitz domain of $\rr^n$ with Lipschitz bound $M$. 
	Let  $T:W^{l,p}(\Omega)\to W^{l,p}(\rr^n)$ be the Stein's extension operator defined in Theorem~\ref{defT}. Then  there exists $C>0$  depending  only on $n,l$ and $M$ such that  
	\begin{equation}\label{Sbound2}
	\|D^\alpha Tf \|_{M^{\delta,\phi}_p(\rr^n)}\le C\sum_{|\beta|=|\alpha|} \|D^\beta f\|_{M^{\delta,\phi}_p(\Omega)}
	\end{equation}
	holds for all $f \in W^{l,p}(\Omega)$, $\delta>0$, and  $\alpha \in \nn_0^n$ with $|\alpha|\le l$. 
\end{theorem}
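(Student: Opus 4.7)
The plan is to fix $x_0 \in \rr^n$ and $0 < r < \delta$, and establish the estimate $\int_{Q_{2r}(x_0)} |D^\alpha Tf|^p\,dx \le C\phi(r)\sum_{|\beta|=|\alpha|} \|D^\beta f\|_{M_p^{\phi,\delta}(\Omega)}^p$. By the density procedure of Remark~\ref{remsob} and the equivalence \eqref{eqnorm}, it is enough to work with $f \in C_b^\infty(\bar\Omega)\cap W^{l,p}(\Omega)$ and with the cube norm $\|\cdot\|_{M_{p,Q}^{\phi,\delta}}$. Split $Q_{2r}(x_0) = (Q_{2r}(x_0)\cap\Omega)\cup(Q_{2r}(x_0)\cap\Omega^c)$; on the first piece $Tf=f$ and the bound is immediate from the definition of the Morrey norm, so the entire work lies in the second piece.

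On $\Omega^c$, differentiating \eqref{defT2} under the integral and applying Lemma~\ref{derivatives} writes $D^\alpha Tf(x)$ as a finite sum of terms of the form
\[\int_1^\infty \lambda^s\,\tau(\lambda)\,D^\beta f(\bar x, y+\lambda\delta^*(\bar x,y))\,\prod_{i=1}^k (D^{\gamma_i}\delta^*(x))^{n_i}\,d\lambda,\]
with $\sum_i n_i(|\gamma_i|-1) = |\alpha|-|\beta|$ and $s\ge 1$ whenever $|\beta|<|\alpha|$. Thanks to \eqref{regb}, the $\delta^*$-derivative product is controlled pointwise by $C\, d(x,\bar\Omega)^{|\beta|-|\alpha|}$, a factor that is singular near $\partial\Omega$ when $|\beta|<|\alpha|$. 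To suppress these unwanted lower-order contributions I would exploit the moment-vanishing property (ii) of $\tau$: for each $|\beta|<|\alpha|$, expand $D^\beta f(\bar x, y+\lambda\delta^*)$ in Taylor series in $\lambda$ at $\lambda=1$ up to order $N=|\alpha|-|\beta|-1$. The polynomial part produces moments of $\tau$ of order $s+j\ge 1$ and therefore vanishes, while the integral remainder carries a prefactor $(\delta^*)^{N+1}\sim d(x,\bar\Omega)^{N+1}$ that exactly cancels the singular factor $d(x,\bar\Omega)^{-(N+1)}$ from the $\delta^*$-derivative product. Fubini in $(t,\lambda)$ then produces, on $\Omega^c$, a clean representation
\[D^\alpha Tf(x) = \sum_{|\alpha'|=|\alpha|} a_{\alpha'}(x)\int_1^\infty K_{\alpha'}(t)\,D^{\alpha'} f(\bar x, y+t\delta^*(\bar x,y))\,dt,\]
with $|a_{\alpha'}|\le C$ uniformly and kernels $K_{\alpha'}(t)$ decaying faster than any polynomial as $t\to\infty$, by property (i) of $\tau$.

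Given this representation, Minkowski's integral inequality reduces the problem to bounding, for each $t\ge 1$, the quantity $F_{\alpha'}(t):=\big(\int_{Q_{2r}(x_0)\cap\Omega^c}|D^{\alpha'} f(\bar x, y+t\delta^*(\bar x,y))|^p\,dx\big)^{1/p}$ in terms of $\phi(r)^{1/p}\|D^{\alpha'} f\|_{M_{p,Q}^{\phi,\delta}(\Omega)}$ with at most polynomial growth in $t$. Using Fubini in $\bar x$ together with the Lipschitz bound $|\partial_y\delta^*|\le C$ from \eqref{regb}, the variation of $\delta^*$ over $Q_{2r}(x_0)$ is $O(r)$, so the image of the admissible $y$-range under $y\mapsto y+t\delta^*(\bar x, y)$ is contained in an interval of length $O(r(1+t))$ inside $\Omega$. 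Covering this image by at most $C_n(1+t)^n$ cubes of edge $2r$ via Lemma~\ref{covering} applied with $k\sim 1+t$, invoking the Morrey bound $\phi(r)\|D^{\alpha'} f\|_{M_{p,Q}^{\phi,\delta}}^p$ per cube, and using the Hardy-type inequality of Lemma~\ref{hardy} to control the fine structure of the substitution $u=y+t\delta^*(\bar x,y)$, one obtains $F_{\alpha'}(t) \le C(1+t)^{n/p}\phi(r)^{1/p}\|D^{\alpha'} f\|_{M_{p,Q}^{\phi,\delta}(\Omega)}$. The rapid decay of $K_{\alpha'}(t)$ then makes $\int_1^\infty |K_{\alpha'}(t)|(1+t)^{n/p}\,dt$ finite and yields the desired bound. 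The main obstacle is precisely this inner $L^p$ estimate: the map $y\mapsto y+t\delta^*(\bar x,y)$ may fail to be monotone for large $t$ because $\partial_y\delta^*$ need not have a definite sign, so I expect the argument to split the $t$-integration at a threshold of size $1/\|\partial_y\delta^*\|_\infty$, treating the two regimes by direct change of variables with bounded Jacobian for smaller $t$ and by the Hardy inequality combined with the decay of $K_{\alpha'}$ for larger $t$.
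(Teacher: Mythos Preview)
Your reduction to smooth $f$ and your handling of the derivatives (Lemma~\ref{derivatives} plus Taylor expansion and moment cancellation via property (ii) of $\tau$) match the paper's argument and are correct; they lead, after taking absolute values, to a pointwise bound of the form
\[
|D^\alpha Tf(x)|\le C\sum_{|\alpha'|=|\alpha|}\int_1^\infty K(t)\,\bigl|D^{\alpha'}f(\bar x, y+t\delta^*(\bar x,y))\bigr|\,dt
\]
with $K$ rapidly decaying.

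The gap is in the next step. You propose to apply Minkowski in $t$ and then bound, for each fixed $t\ge 1$, the quantity $F_{\alpha'}(t)=\|D^{\alpha'}f(\bar x, y+t\delta^*(\bar x,y))\|_{L^p(Q\cap\Omega^c)}$ by $C(1+t)^{n/p}\phi(r)^{1/p}\|D^{\alpha'}f\|_{M_{p,Q}^{\phi,\delta}(\Omega)}$. This estimate is not available. The only information on $\partial_y\delta^*$ furnished by \eqref{regb} is the bound $|\partial_y\delta^*|\le C$; nothing prevents $\partial_y\delta^*$ from taking a value $-1/t_0$ with $t_0\ge 1$ on part of $Q$ (for instance, with $\psi\equiv 0$ one may take $\delta^*(y)=|y|\,h(\log|y|)$ for a smooth $h$ with $2\le h\le 4$ and $h'-h$ attaining small negative values, and all of \eqref{rega}, \eqref{regb}, \eqref{deltabound} still hold). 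For $t$ near such a $t_0$ the Jacobian $1+t\,\partial_y\delta^*$ of $y\mapsto y+t\delta^*(\bar x,y)$ degenerates, and $F_{\alpha'}(t)^p$ becomes essentially a trace-type quantity $r\int_F|D^{\alpha'}f(\bar x,u(\bar x))|^p\,d\bar x$, which is \emph{not} controlled by the Morrey norm of $D^{\alpha'}f$ uniformly in $f$. Covering the image by $O((1+t)^n)$ cubes only tells you where the values of $f$ land; it does not bound $F_{\alpha'}(t)$ by the $L^p$ norm of $f$ over the image. Lemma~\ref{hardy} as stated is an inequality for the operator $g\mapsto\int_{x+c}^{x+d}x^\beta g(y)\,dy$, not a tool for a single frozen $t$, so your appeal to it here is not a repair. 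Your proposed threshold splitting at $1/\|\partial_y\delta^*\|_\infty$ does not help either, since the problematic values of $t$ depend on the point $(\bar x,y)$ and can be arbitrarily large whenever $\partial_y\delta^*$ is small and negative.

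The paper avoids this obstruction by never freezing $t$. It changes variables $s=y+\lambda\delta^*$ in the $\lambda$-integral (always legitimate since $\partial_\lambda s=\delta^*>0$), arriving at
\[
|D^\alpha Tf(\bar x,y)|\le C\sum_{|\alpha'|=|\alpha|}\int_{2\psi(\bar x)-y}^\infty\bigl|D^{\alpha'}f(\bar x,s)\bigr|\,\frac{(\psi(\bar x)-y)^2}{(s-y)^3}\,ds.
\]
It then slices the $s$-integral into consecutive pieces of length $r$, applies Minkowski to the resulting sum, and uses Lemma~\ref{hardy} on each piece to control the $L^p_y$-norm \emph{without any change of variables in $y$}. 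The Hardy constants produce coefficients $s_k(\bar x)$ as in \eqref{skfom}; these are shown, via a short dichotomy on the ratio $\alpha(\bar x)=r/(\psi(\bar x)-a)$, to be dominated by a summable sequence $s_k(Q)$ independent of $\bar x$. Finally each strip $S_k\subset\Omega$ has diameter $O(r)$ and is covered by a bounded number of cubes of edge $r$ centred in $\Omega$ (Lemma~\ref{covering}), which yields the Morrey bound. The decomposition in $s$ and the summability of the Hardy constants are precisely the mechanism that replaces your (unavailable) pointwise-in-$t$ estimate.
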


\begin{proof}   Let $\Omega$ be as in \eqref{omegaele} where $\psi$ is a Lipschitz function  with Lipschitz constant equal to $M$.
	We divide the proof in two steps.
	
	{\it Step 1. } 
	We prove inequality \eqref{Sbound2} for  functions $f \in  C^{\infty }_b(\bar{\Omega})\cap W^{l,p}(\Omega)$.
	
	First, we consider the  case $l=0$. By \eqref{eqnorm} it is enough to prove that for an arbitrary open cube $Q$ of edge length $r$ with $0<r<\delta$ and edges parallel to the coordinate axes we have
	\begin{equation}
	\phi(r/2)^{-1/p}\|Tf\|_{L^p(Q)} \le  C \| f\|_{M_{p,Q}^{\phi,\delta/2}(\Omega)} \label{cubestimate}
	\end{equation}
	for a constant $	C $ depending only on $n,M$. We remark that along the proof the value of the constant denoted by $C$ may vary, but it will remain dependent only on $l,n,M$.  Let  $\Omega^- = \{ (\bar x , y) \in \rr^n \ | \ \bar x \in \rr^{n-1}, \ y<\psi(\bar x) \}$.
	We find it convenient to discuss separately the following  three cases: 1. $ \overline Q \subset \Omega$ 2. $ \overline Q \subset \Omega^-$ 3. $ \overline Q\cap \{y=\psi(\bar x)\} \neq \emptyset.$
	
	Case 1. This case is trivial, since $Tf=f$ in $\Omega$ hence we have that 
	\[ \phi(r/2)^{-1}\int_Q |Tf(x)|^pdx =\phi(r/2)^{-1}\int_Q |f(x)|^pdx  \le  \| f\|_{M_{p,Q}^{\phi,\delta/2}(\Omega)}^p.\]

	Case 2. Let us write $Q$ as $Q=F\times (a-r,a) $ where $F$ is an open cube of $\rr^{n-1}$ of 
		edge length 
	$r$ and $a<\psi(\bar x)$ for every $\bar x \in F$. Fix now $(\bar x, y) \in Q$. By assumptions $\tau(\lambda)=O(\lambda^{-3})$, as $\lambda \rightarrow \infty.$ Hence using the definition of $Tf$ we have
	\begin{eqnarray} \lefteqn{|Tf(\bar x,y)| \le\int_1^\infty |f(\bar x, y+\lambda \delta^*(\bar x,y))||\tau(\lambda)|d\lambda  }\nonumber  \\
	& \qquad\qquad\qquad\qquad\qquad \le C \int_1^\infty |f(\bar x, y+\lambda \delta^*(\bar x,y))|\frac{1}{\lambda^3}d\lambda \label{bullet1}.
	\end{eqnarray}
	By applying the change of variable $s=y+\lambda \delta^*(\bar x,y)$, we get
	\begin{eqnarray}  |Tf(\bar x,y)|&\le &C \int_{y+\delta^*(\bar x, y)}^\infty |f(\bar x, s)|\frac{(\delta^*(\bar x, y))^2}{(s-y)^3}ds\nonumber \\
	&\le &  C \int_{2\psi(\bar x)-y}^\infty |f(\bar x, s)|\frac{(\psi(\bar x)-y)^2}{(s-y)^3}ds \label{bullet2}
	\end{eqnarray} 
	because $ 2c_2c_3 (\psi(\bar x)-y)\ge\delta^*(\bar x, y)\ge 2(\psi(\bar x)-y)$, which follows from \eqref{rega} and  \eqref{deltabound}. 
	By decomposing  the last integral in \eqref{bullet2} we obtain
	\[ |Tf(\bar x,y)|\le C \sum_{k=0}^\infty \int_{2\psi(\bar x)-y+kr}^{2\psi(\bar x)-y+(k+1)r} |f(\bar x, s)|\frac{(\psi(\bar x)-y)^2}{(s-y)^3}ds.\]
	Now by applying Minkowski's inequality for an infinite sum we get
	\begin{align*} & \|Tf(\bar x, y)\|_{L_y^p(a,a-r)} \\
	&\le C \sum_{k=0}^\infty \left( \int_{a-r}^{a}\left ( \int_{2\psi(\bar x)-y+kr}^{2\psi(\bar x)-y+(k+1)r} \frac{|f(\bar x, s)|(\psi(\bar x)-y)^2}{(s-y)^3}ds\right)^pdy \right)^{\frac{1}{p}}. \addtag \label{sum2} 
	\end{align*}
	We plan to estimate each summand in the right hand side of \eqref{sum2}. First of all, by applying  the change of variable $y=\psi(\bar x)-z$ we get that 
	each summand equals
	\begin{equation}
	\label{summand} 
	\left( \int_{\psi(\bar x)-a}^{\psi(\bar x)-a+r}\left (\int_{\psi(\bar x)+z+kr}^{\psi(\bar x)+z+(k+1)r} |f(\bar x, s)|\frac{z^2}{(s-\psi(\bar x)+z)^3}ds\right)^pdz \right)^{\frac{1}{p}}.
	\end{equation}
	Then we apply the change of variable $t=s-\psi(\bar x)$ to the inner integral of \eqref{summand}, obtaining
	\begin{align*} 
	& \left( \int_{\psi(\bar x)-a}^{\psi(\bar x)-a+r}\left (\int_{z+kr}^{z+(k+1)r} |f(\bar x, t+\psi(\bar x))|\frac{z^2}{(t+z)^3}dt\right)^pdz \right)^{\frac{1}{p}} \\ 
	&\le \left( \int_{\psi(\bar x)-a}^{\psi(\bar x)-a+r}\left (\int_{z+kr}^{z+(k+1)r} |f(\bar x, t+\psi(\bar x))|\frac{z^2}{t^3}dt\right)^pdz \right)^{\frac{1}{p}},
	\end{align*}
	where we have used that  $z\ge\psi(\bar x)-a>0.$  Next by 
	Lemma \ref{hardy}
	(with $f(t)$ replaced by $|f(\bar x, \psi(\bar x) +t)|/t^3$, $a$ replaced by $\psi(\bar x)-a$, $b$ replaced by $\psi (\bar x )-a+r $, $c=kr$, $d=(k+1)r$, $\beta=2$)  we have	
	\begin{align*}
	&\left( \int_{\psi(\bar x)-a}^{\psi(\bar x)-a+r}\left (\int_{z+kr}^{z+(k+1)r} |f(\bar x, t+\psi(\bar x))|\frac{z^2}{t^3}dt\right)^pdz \right)^{\frac{1}{p}}  \\
	&\le \int_{1+k\alpha/(\alpha+1)}^{1+(k+1)\alpha}\frac{1}{t^{3+1/p}}dt \left ( \int_{\psi(\bar x)-a+kr}^{\psi(\bar x)-a+(k+2)r}|f(\bar x, z+\psi(\bar x))|^p  dz \right)^{\frac{1}{p}}\\
	&\le \int_{1+k\alpha/(\alpha+1)}^{1+(k+1)\alpha}\frac{1}{t^{3}}dt \left ( \int_{\psi(\bar x)-a+kr}^{\psi(\bar x)-a+(k+2)r}|f(\bar x, z+\psi(\bar x))|^p  dz \right)^{\frac{1}{p}}\\
	&= s_k(\bar x)\left ( \int_{\psi(\bar x)-a+kr}^{\psi(\bar x)-a+(k+2)r}|f(\bar x, z+\psi(\bar x))|^p  dz \right)^{\frac{1}{p}}
	\end{align*}
	where $\alpha=\alpha(\bar x)=r/(\psi(\bar x)-a)$ and 
	\begin{equation}\label{skfom}
	s_k(\bar x)=\frac{\alpha(\alpha+2)}{2((k+1)\alpha+1)^2}\, .
	\end{equation}
	Using this estimate in \eqref{sum2} we get
	\begin{align*}
	\left(\int_{a-r}^{a}|Tf(\bar x,y)|^p dy\right)^{\frac{1}{p}} & \le C  \sum_{k=0}^\infty s_k(\bar x) \left ( \int_{\psi(\bar x) -a+kr}^{\psi(\bar x) -a+(k+2)r}|f(\bar x, z+\psi(\bar x))|^p  dz \right) ^{\frac{1}{p}} \\
	&= C \sum_{k=0}^\infty s_k(\bar x) \left ( \int_{2\psi(\bar x) -a+kr}^{2\psi(\bar x) -a+(k+2)r}|f(\bar x, y)|^p  dy \right) ^{\frac{1}{p}}. \addtag \label{sum3}
	\end{align*}
	We claim now that there exists a sequence $s_k(Q)$, not depending on $\bar x$ such that $s_k(\bar x) \le s_k(Q)$ for every $\bar x \in F$ and such that
	\begin{equation}
	\sum_{k=0}^\infty s_k(Q)\le \tilde C
	\end{equation}	
	where $\tilde C$ is a constant depending only on $n$ and $M$. To see this, observe that the function $\alpha=\alpha(\bar x) : \bar F \to \rr$ is continuous and strictly positive, hence it admits a minimum $\ell >0$ and a maximum $L$. We distinguish two cases: $\ell >1/(2\sqrt nM)$ and $\ell  \le 1/(2\sqrt nM)$.
	If $\ell >1/(2\sqrt nM)$ we get
	\[    s_k(\bar x)=  \frac{\alpha(\alpha+2)}{   2  ((k+1)\alpha+1)^2} \le  \frac{\alpha(\alpha+2)}{   2   (k+1)^2\alpha^2}= \frac{1+\frac{2}{\alpha}}{2  (k+1)^2} \le \frac{1+4\sqrt n M}{2  (k+1)^2}=:s_k(Q).\]
	That is what we wanted. We consider now the case $\ell \le 1/(2\sqrt nM)$. We first observe that since the Lipschitz constant of  $\psi$ is  $M$, we have that 
	\[ \frac{\psi(\bar x_1)-a}{r}-\frac{\psi(\bar x_2)-a}{r}\le \sqrt n M\]
	for every $\bar x_1, \bar x_2 \in \bar F,$ that implies
	\[ \frac{1}{\ell }-\frac{1}{L}\le \sqrt n M, \]
	ans thus
	\[ L \le \frac{\ell}{1-\ell \sqrt nM}  \le 2\ell .\]
	Now we can perform the following estimate
	\begin{eqnarray*}\lefteqn{
		s_k(\bar x)=   \frac{\alpha(\alpha+2)}{  2   ((k+1)\alpha+1)^2} \le   \frac{2\ell  (2\ell+2)}{((k+1)\ell +1)^2} } \\
		& & 
		\qquad\qquad\qquad
		  \le\left(\frac{1}{\sqrt nM}+2\right ) \frac{2\ell }{((k+1)\ell+1)^2}:=s_k(Q). 
	\end{eqnarray*}	
	Observe now that 
	\[\sum_{k=0}^\infty \frac{\ell }{((k+1)\ell +1)^2}=   \sum_{k=0}^\infty \int_{\ell k}^{\ell  (k+1)} \frac{1}{((k+1)\ell  +1)^2}dt \le \int_0^\infty \frac{1}{(t+1)^2}dt= 1.\]
	This proves our claim.
	Applying the estimate  $s_k(\bar x) \le s_k(Q)$ in  \eqref{sum3} we get
	\begin{align*}
	\left(\int_{a-r}^{a}|Tf(\bar x,y)|^p dy\right)^{\frac{1}{p}} & \le C \sum_{k=0}^\infty s_k(Q) \left ( \int_{2\psi(\bar x) -a+kr}^{2\psi(\bar x) -a+(k+2)r}|f(\bar x, y)|^p  dy \right) ^{\frac{1}{p}}. 
	\end{align*}	
	Taking the $L^p$ norm on $F$ on both sides  and applying again Minkowski inequality we obtain
	\begin{align*}
	\|Tf\|_{L^p(Q)}  &\le C \sum_{k=0}^\infty s_k(Q) \|f\|_{L^p(S_k)}. \addtag \label{finalsum}
	\end{align*}
	where $S_k=\{ (\bar x, y) \in \rr^n \ | \ \bar x \in F ,\  2\psi(\bar x) -a+kr < y < 2\psi(\bar x) -a+(k+2)r \}$. The set $S_k$ has the following two properties 
	\begin{equation}
	\label{propertiesS}
	{\rm diam}(S_k)\le   c_5 r,\ \ {\rm and}\ \ S_k\subset \Omega\, ,
	\end{equation}
	where  $ c_5$ is a constant depending only on $n$ and $M$.   Recall that ${\rm diam }(A)$ denotes the diameter of a set $A$.
	To prove the first property in \eqref{propertiesS}, we consider  two arbitrary points  $(\bar x_1,y_1),(\bar x_2,y_2)$ in $S_k$, we assume directly that  $y_2\ge y_1$ and we easily see that 
	\begin{align*}
	y_2-y_1 &\le2\psi(\bar x_2)-a+(k+2)r -(2\psi(\bar x_1) -a+kr) \\
	&=2 (\psi(\bar x_2)-\psi(\bar x_1))+2r\le 2M|\bar x_1 - \bar x_2|+2r\le 2r(M\sqrt{n-1}+1 ).
	\end{align*}
	To prove  the second property in \eqref{propertiesS},  just notice that for every $(\bar x, y ) \in S_k$ we have $y>2\psi(\bar x)-a>\psi(\bar x)$.   The first property in \eqref{propertiesS}
	together with Lemma~\ref{covering} implies that there exists a collection of open cubes $Q_{1,k},...,Q_{m,k}$ centred in $S_k$   and with edges of length 
	$r$ that covers $S_k$, with $m \in \nn$ depending only on $M$ and $n$. Hence
	$ S_k \subset \bigcup_{i=1}^m (Q_{i,k}\cap \Omega) $
	and  the second property in \eqref{propertiesS}  guarantees that every cube $Q_{i,k}$ is centered in $\Omega$. 
	Therefore by \eqref{finalsum}  we get
	\[ \| Tf\|_{L^p(Q)} \le C \sum_{k=0}^\infty s_k(Q) (\|f\|_{L^p(Q_{1,k}\cap \Omega)}+...+\|f\|_{L^p(Q_{m,k}\cap \Omega)}),\]
	hence dividing in both sides by $\phi(r/2)^{\frac{1}{p}}$ we obtain
	\[\phi(r/2)^{-1/p}\|Tf\|_{L^p(Q)} \le C \sum_{k=0}^\infty s_k(Q) \| f\|_{M^{\phi,\delta/2}_{p,Q}(\Omega)} \le  \| f\|_{M^{\phi,\delta/2}_{p,Q}(\Omega)}C  \, ,\]
	that is \eqref{cubestimate}.

	Case 3. Again,  we write $Q$ as $Q=F \times (a-r,a)$ as above and we set $Q^+=Q\cap\Omega$ and $Q^-=Q\cap\Omega^-$. 
	Moreover, $Q^-$ can be further decomposed as $Q^-=Q^-_1 \cup Q^-_2$ where $Q^-_1=\{ (\bar x,y) \in Q^- \ | \ \psi(\bar x)>a \}$ and $Q^-_2=\{ (\bar x,y) \in Q^- \ |\  a-r \le \psi(\bar x)\le a \}$. Note that 
	$\|Tf\|_{L^p(Q)}\le\|f\|_{L^p(Q^+)}+\|Tf\|_{L^p(Q^-)}$ and that it's immediate to verify that  $\| f\|_{L^p(Q^+)} \le  C \phi(r/2)^{\frac{1}{p}} \| f\|_{M_p^{\phi,\delta/2}(\Omega)}$, where $C$ depends only on $n$. 
	Hence it remains to estimate $\|Tf\|_{L^p(Q^-)}$. 
	Define the two Borel sets $\mathcal{S}_1:=\{\bar x \in \bar F \ | \ \psi(\bar x) > a\}$ and $\mathcal{S}_2:=\{\bar x \in \bar F \ | \ a-r\le \psi(\bar x) \le a\}$
	and  note that
	\begin{align*}  \|Tf\|^p_{L^p(Q^-)} &= \|Tf\|^p_{L^p(Q_1^-)} +\|Tf\|^p_{L^p(Q_2^-)} \\
	&=\int_{{\mathcal{S}}_1} \int_{a-r}^{a} |Tf(\bar x,y)|^pdy d\bar x+\int_{{\mathcal{S}}_2} \int_{a-r}^{\psi(\bar x)} |Tf(\bar x,y)|^pdy d\bar x
	\end{align*}
 	For every $\epsilon>0$ we define now the compact set ${\mathcal{S}}_1^\epsilon:=\{\bar x \in \bar F \ | \ \psi(\bar x) \ge a+\epsilon\}$ and we notice that
 	if $\bar x \in {\mathcal{S}}^\epsilon_1$ then \eqref{sum3} holds. Morover in the set $\mathcal{S}_1^\epsilon$, the function $\alpha(\bar x)=r/(\psi(\bar x)-a)$ is continuous and strictly positive and admits a minimum $\ell(\epsilon)>0$ and a maximum $L(\epsilon)$.  Thus by arguing as in Case 2 we can prove the existence of quantities $s_k(\epsilon,Q)$ such thath $s_k(\bar x)\le s_k(\epsilon,Q)$ and
 	\[ \sum_{k=0}^\infty s_k(\epsilon,Q)\le \tilde C \]
 		 where $\tilde C$ depends only on $n$ and $M$.
 	Hence taking the $L^p$ norm on $\mathcal{S}_1^\epsilon$ in \eqref{sum3} we obtain
	 	\begin{align*}
	 &\left(\int_{S_1^\varepsilon}\int_{a-r}^a|Tf(\overline x,y)|^p dy d\overline x\right)^{\frac{1}{p}} \le C  \sum_{k=0}^\infty s_k(\epsilon,Q) \|f\|_{L^p(S'_k)}\, , 
	 \end{align*}
	 where $S'_k=\{(\bar x, y)\in \rr^n:\ \bar x\in  {\mathcal{S}_1^\epsilon},\ \ \psi (\bar x)+a +kr<y< \psi (\bar x)+a +(k+2)r   \}$.
	 We observe that the sets $S'_k$ satisfy the same properties \eqref{propertiesS} of the sets $S_k$ considered in Case 2, hence dividing by $\phi(r/2)^{-1/p}$ we infer
	 \begin{align*}
	 & \phi(r/2)^{-1/p}\left(\int_{S_1^\varepsilon}\int_{a-r}^a|Tf(\overline x,y)|^p dy d\overline x\right)^{\frac{1}{p}} \le C \tilde C \| f\|_{M_{p,Q}^{\phi,\delta/2}(\Omega)}\, , 
	 \end{align*}
 	Recall that Theorem  \ref{defT} guarantees that $Tf \in L^p(\rr^n)$, hence by  Dominated Convergence Theorem we can let $\epsilon$ go to zero  to get
 	\begin{equation}
 	\phi(r/2)^{-1/p} \| Tf\|_{L^p(Q^-_1)} \le C \| f\|_{M_{p,Q}^{\phi,\delta/2}(\Omega)} \label{q1}\, ,
 	\end{equation}
	where $C$  depends only on $n$ and $M$. If instead $\bar x \in {\mathcal{S}}_2$, since $\psi(\bar x)\le a$, we have
	\begin{equation}
	\int_{a-r}^{\psi(\bar x)} |Tf(\bar x,y)|^pdy \le \int_{\psi(\bar x)-r}^{\psi(\bar x)} |Tf(\bar x,y)|^pdy. \label{psia}
	\end{equation}
	Now  for any $\epsilon >0$,  by \eqref{sum3} with $a$ replaced by $\psi(\bar x)-\epsilon$, we obtain
	\begin{align*}
	\left(\int_{\psi(\bar x)-\epsilon-r}^{\psi(\bar x)-\epsilon}|Tf(\bar x,y)|^p dy\right)^{\frac{1}{p}} \le C  \sum_{k=0}^\infty s_k( \epsilon) \left ( \int_{\psi(\bar x) +\epsilon +kr}^{\psi(\bar x) +\epsilon +(k+2)r}|f(\bar x, y)|^p  dy \right) ^{\frac{1}{p}},
	\end{align*}
	where $s_k(\epsilon)$ has the same expression as in \eqref{skfom}, with $\alpha =r/\epsilon$. We remark that, although the value of $\alpha $ blows up as $\epsilon$ goes to zero, the quantity $s_k(\epsilon)$ tends to $\frac{1}{2(k+1)^2}$ that has a finite sum. More precisely we have that, if $\alpha(\epsilon)>1$, then $s_k(\epsilon) \le \frac{3}{2(k+1)^2}$ and if $\alpha(\epsilon)\le 1$ then $s_k(\epsilon) \le \frac{3\alpha}{2((k+1)\alpha+1)^2}$. Moreover we have showed in Case 2 that $\sum_k \frac{\alpha}{((k+1)\alpha+1)^2}\le 1$ for any value of $\alpha>0$. In particular we deduce that for any $\epsilon>0$
	\[\sum_{k=0}^\infty s_k(\epsilon) \le \tilde C\]
	for some constant $\tilde C>0$ independent of $\epsilon$. 
	Taking now the $L^p$ norm over ${\mathcal{S}}_2$ on both sides of the previous integral inequality we obtain
	\begin{align*}
	&\left(\int_{S_2}\int_{\psi(\overline x)-\epsilon-r}^{\psi(\overline x)-\epsilon}|Tf(\overline x,y)|^p dy d\overline x\right)^{\frac{1}{p}} \le  C \sum_{k=0}^\infty s_k(\epsilon) \|f\|_{L^p(S''_k)}\, , 
	\end{align*}
	where $S''_k=\{(\bar x, y)\in \rr^n:\ \bar x\in  {\mathcal{S}}_2,\ \ \psi (\bar x)+\epsilon +kr<y< \psi (\bar x)+\epsilon +(k+2)r   \}$.
	We observe that the sets $S''_k$ satisfy the same properties \eqref{propertiesS} of the sets $S_k$ considered in Case 2, therefore
	\begin{equation}\label{prelimit} \left(\frac{1}{\phi(r/2)}\int_{S_2} \int_{\psi(\bar x)-\epsilon-r}^{\psi(\bar x)-\epsilon}|Tf(\bar x,y)|^p dy d\bar x\right)^{\frac{1}{p}}\le C \tilde C  \| f\|_{M_{p,Q}^{\phi,\delta/2}(\Omega)} 
	\end{equation}
	with $C,\tilde C$  depending only on $n$ and $M$. Again, since $Tf \in L^p(\rr^n)$, by  Dominated Convergence Theorem we can let $\epsilon$ go to zero in  \eqref{prelimit} obtaining
	\[
	\left(\frac{1}{\phi(r/2)}\int_{S_2} \int_{\psi(\bar x)-r}^{\psi(\bar x)}|Tf(\bar x,y)|^p dy d\bar x\right)^{\frac{1}{p}}\le C \| f\|_{M_{p,Q}^{\phi,\delta/2}(\Omega)} . 
	\]
	Combining the above inequality with \eqref{psia} we obtain
	\begin{equation}  \left(\frac{1}{\phi(r/2)}\int_{S_2} \int_{a-r}^{\psi(\bar x)}|Tf(\bar x,y)|^p dy d\bar x\right)^{\frac{1}{p}}\le C \| f\|_{M_{p,Q}^{\phi,\delta/2}(\Omega)} . \label{q2}
	\end{equation}
	Thus  putting together  \eqref{q1} and \eqref{q2} gives
	\begin{align*}
	\phi(r/2)^{-1/p} \| Tf\|_{L^p(Q^-)} &\le C \| f\|_{M_{p,Q}^{\phi,\delta/2}(\Omega)} 
	\end{align*}
	and   this concludes the proof in  Case 3.
	
	We consider now the case $l>0.$ By  \eqref{eqnorm} it's again enough to prove that for an arbitrary open cube $Q$ of 
	edge length 
	$r$ contained in $\rr^n$ we have the estimate
	$
	\phi(r/2)^{-1/p}\|D^\alpha Tf\|_{L^p(Q)}  \le C  \sum_{|\beta| =|\alpha|}\| D^\beta f\|_{M_{p,Q}^{\phi,\delta/2}(\Omega)}
	$
	for a constant $C $ depending only on $l,n,M$. We will consider the same three cases that appeared with $l=0$. Since $D^\alpha Tf=D^\alpha f$ in $\Omega$, the first case is trivial as before. We will see that the Cases 2 and 3 also follow from the computations done with $l=0$. We start by  observing that by the boundedness of $f$ and all its derivatives we can differentiate under the integral sign to get 
	$D^\alpha Tf(\bar x,y)= \int_1^\infty D^\alpha g_\lambda(\bar x,y) \tau(\lambda) d\lambda $ 
	for every $(\bar x, y) \in \Omega^-$, where $g_\lambda(\bar x,y)=f(\bar x, y+\lambda \delta^*(\bar x, y))$. By Lemma \ref{derivatives} $D^\alpha g_\lambda(\bar x,y)$ is a finite sum of terms of the type 
	\[ \widetilde c\lambda ^s D^\beta f(\bar x, y+\lambda \delta^*(\bar x, y)(D^{\gamma_1}\delta^*(x))^{n_1}\cdots (D^{\gamma_k}\delta^*(x))^{n_k}.\]
	For each of these terms we also set
	\begin{align*}
	&T_{s,\beta,(\gamma_1,n_1),...,(\gamma_k,n_k)}(x) \\
	&=  (D^{\gamma_1}\delta^*(x))^{n_1}\cdots (D^{\gamma_k}\delta^*(x))^{n_k}  \int_1^\infty \lambda ^s D^\beta f(\bar x, y+\lambda \delta^*(\bar x, y))\tau(\lambda) d\lambda.
	\end{align*}
	Thus $D^\alpha Tf(\bar x,y)$ is a finite sum of terms of the type $\widetilde c T_{s,\beta,(\gamma_1,n_1),...,(\gamma_k,n_k)}(x)$. Now, since the constants $\widetilde c$ and the number of terms of the sum depends only on $l$ and $n$, we just need to show that
	\begin{equation}
	\phi(r/2)^{-1/p}\|T_{s,\beta,(\gamma_1,n_1),...,(\gamma_k,n_k)}\|_{L^p(Q)} \le  C  \sum_{|\gamma| = |\alpha|}\| D^\gamma f\|_{M_{p,Q}^{\phi,\delta/2}(\Omega)} \label{Qderbound}
	\end{equation}
	for a constant $C $ depending only on $l,n,M$.
	
	We start by assuming that the multi-index $\beta $ on the left hand side of \eqref{Qderbound} satisfies $|\beta|=|\alpha|.$ By the property a) in Lemma~\ref{derivatives} and by the estimates of the derivatives of $\delta^*(=2c_3\Delta)$ given by \eqref{regb} we have that 
	\begin{align*}
	|T_{s,\beta,(\gamma_1,n_1),...,(\gamma_k,n_k)}(x) | &\le C  \int_1^\infty\lambda^s |D^\beta f(\bar x, y+\lambda \delta^*(\bar x, y))| |\tau(\lambda)|d\lambda \\
	&\le C  \int_1^\infty|D^\beta f(\bar x, y+\lambda \delta^*(\bar x, y))| \frac{1}{\lambda^3}d\lambda
	\end{align*}
	where  $C $ depends only  on $n$ and $M.$ We are now in the same situation as in the second inequality of \eqref{bullet1} (with $f$ replaced by $D^\beta f$). Hence we can proceed to prove the estimate in the same way as in case $l=0$ to get 
	$\phi(r/2)^{-1/p}\|T_{s,\beta,(\gamma_1,n_1),...,(\gamma_k,n_k)}\|_{L^p(Q)} \le C \| D^\beta f \|_{M_{p,Q}^{\phi,\delta/2}(\Omega)} $
	for every $Q$ in Case 2 and Case 3, where $C $ depends only on $n$ and $M.$ This proves \eqref{Qderbound} when $|\beta|=|\alpha|.$
	
	Suppose now that $|\beta|<|\alpha|$. We recall that, by Lemma~\ref{derivatives}, $|\beta|<|\alpha|$ implies that $s,k>0$. Arguing as above, using again \eqref{regb} and  Lemma~\ref{derivatives} we get
	\begin{align*}
	&|T_{s,\beta,(\gamma_1,n_1),...,(\gamma_k,n_k)}(x) | \\
	&\le  \frac{C}{d(x,\bar \Omega)^{|\alpha|-|\beta|}} \left|\int_1^\infty \lambda^sD^\beta f(\bar x, y+\lambda \delta^*(\bar x, y))\tau(\lambda)d\lambda \right| \\
	& \le  \frac{C}{(\psi(\bar x)-y)^{|\alpha|-|\beta|}} \left |\int_1^\infty \lambda^s D^\beta f(\bar x, y+\lambda \delta^*(\bar x, y)) \tau(\lambda)d\lambda \right |\addtag \label{zzz} .   
	\end{align*}
	Where $C $ depends only on $n,l$ and $M$. By applying Taylor's formula  about the point  $t=\delta^*=\delta^*(\bar x,y)$  up to order $m=|\alpha|-|\beta|$ and  with  remainder in integral form for the function $t \mapsto D^\beta f(\bar x, y+t)$,  we get
	\begin{align*}
	D^\beta f(\bar x, y+\lambda \delta^*) = &\sum_{j=0}^{m-1} \frac{(\lambda \delta^*-\delta^*)^j}{j!}\frac{\partial^j D^\beta f}{\partial x_n^j}(\bar x,y+\delta^*) \\
	&\ \ +\int_{\delta^*}^{\lambda \delta^*} \frac{(\lambda \delta^*-t)^{m-1}}{m!}\frac{\partial^{m} D^\beta f}{\partial x_n^{m} }(\bar x,y+t)dt. 
	\end{align*}
	We observe that the terms inside the  sum in the right hand side do not  give any contribution in \eqref{zzz}, since
	\begin{align*} &\int_1^\infty \frac{\lambda^s(\lambda \delta^*-\delta^*)^j}{j!}\frac{\partial^j D^\beta f}{\partial x_n^j}(\bar x,y+\delta^*)\tau(\lambda)d\lambda \\
	&=\frac{\partial^j D^\beta f}{\partial x_n^j}(\bar x,y+\delta^*) \frac{(\delta^*)^j}{j!} \int_1^\infty \lambda^s(\lambda-1)^j\tau(\lambda)d\lambda=0
	\end{align*}
	by the property $ii)$ of $\tau$  and the fact that $s>0$. Hence combining this with \eqref{zzz} we obtain
	\begin{align*}
	&|T_{s,\beta,(\gamma_1,n_1),...,(\gamma_k,n_k)}(x) | \\
	&\le \frac{C }{(\psi(\bar x)-y)^{m}} \left |\int_1^\infty \int_{\delta^*}^{\lambda \delta^*} \frac{(\lambda \delta^*-t)^{m-1}}{m!}\frac{\partial^{m} D^\beta f}{\partial x_n^{m} }(\bar x,y+t)dt \lambda^s \tau(\lambda)d\lambda\right |.
	\end{align*}
	Observing that $(\lambda\delta^*-t)^{m-1}\le (\lambda\delta^*)^{m-1}$, recalling that $  2c_2c_3 (\psi(\bar x)-y)\ge \delta^*$ and using the change of variable $u=y+t$ we get
	\begin{align*}
	|T_{s,\beta,(\gamma_1,n_1),...,(\gamma_k,n_k)}(x) | \le \frac{C }{\delta^*}\int_1^\infty \int_{y+\delta^*}^{y+\lambda \delta^*} \left|\frac{\partial^{m} D^\beta f}{\partial x_n^{m} }(\bar x,u)\right |\lambda^{s+m-1} |\tau(\lambda)|du d\lambda.
	\end{align*}
	Performing a change of order of integration we deduce    
	\[ |T_{s,\beta,(\gamma_1,n_1),...,(\gamma_k,n_k)}(x) | \le\frac{C }{\delta^*}\int_{y+\delta^*}^{\infty} \left |\frac{\partial^{m} D^\beta f}{\partial x_n^{m} }(\bar x,u)\right | \int_{(u-y)/\delta^*}^\infty |\lambda^{s+m-1} \tau(\lambda)| d\lambda du.\]   
	Finally recalling
	that 

	$\tau(\lambda)=O(\lambda^{-m-s-1})$ as $\lambda \rightarrow \infty$,  we can write
	\[ |T_{s,\beta,(\gamma_1,n_1),...,(\gamma_k,n_k)}(x) |  \le C \int_{y+\delta^*}^{\infty} \left |\frac{\partial^{m} D^\beta f}{\partial x_n^{m} }(\bar x,u)\right | \frac{(\delta^*)^2}{(u-y)^3} du.\]  
	We observe that we are now in the same situation as in the first inequality of \eqref{bullet2} of the case $l=0$ (with $f$ replaced by $\frac{\partial^{m} D^\beta f}{\partial x_n^{m} }$) and the same computations lead us to the inequality
	
	\[\phi(r/2)^{-1/p}\|T_{s,\beta,(\gamma_1,n_1),...,(\gamma_k,n_k)}\|_{L^p(Q)}  \le  C \bigl\| \frac{\partial^{m} D^\beta f}{\partial x_n^{m} } \bigr\|_{M_{p,Q}^{\phi,\delta/2}(\Omega)} \] 
	for every $Q$ in Case 2 and Case 3, where $ C$ depends only on $n,l$ and $M$. This concludes the proof of \eqref{Qderbound} and of the case $l>0$ since $m+|\beta|=|\alpha|$. {\it Step 1} is now complete.    
	
	{\it Step 2.} We prove inequality \eqref{Sbound2} for functions $f \in W^{l,p}(\Omega)$. 
	Recall  the definition of the operator $S$ explained in Remark~\ref{remsob}.  Let  $\Gamma$ to be the cone $\Gamma=\{(\bar x, y) \in \rr^n \ | \ M |\bar x|<|y|, y<0 \}$ and let $\eta \in C^\infty_c(\rr^n)$ be a function with  $\int_{\rr^n} \eta(x)dx =1$      and support contained in $\Gamma.$ Then, given $f \in W^{l,p}(\Omega)$, $Sf$ is defined to be the limit in $W^{l,p}(\rr^n)$ of $Tf_\varepsilon$ as $\varepsilon \to 0,$ where $f_\varepsilon(x)=1/\varepsilon^n \int_{\rr^n} f(x-y)\eta(y/\varepsilon)$ for every $x$ in an appropriate neighbourhood of $	\bar \Omega$. We claim that for every $f \in W^{l,p}(\Omega)$, $\delta>0$ and $|\alpha|\le l$
	\begin{equation}
	\|  D^\alpha f_\varepsilon\|_{M^{\phi,\delta}_p(\Omega)} \le \| D^\alpha f\|_{M^{\phi,\delta}_p(\Omega)} \label{epsbound}.
	\end{equation}
	To see this first we notice that $D^\alpha f_\varepsilon(x)=1/\varepsilon^n \int_{\rr^n} D^\alpha f(x-y)\eta(y/\varepsilon)dy$ for every $x \in \Omega.$ Let now $B_{x_0}(r)$ a ball centered in $\Omega$ of radius $0<r<\delta$ and set $\eta_\varepsilon(x)=\varepsilon^{-n}\eta(x/\varepsilon)$. By Minkowski's integral inequality   
	\begin{align*}\lefteqn{
		\|D^\alpha f \ast \eta_\varepsilon \|_{L^p(B_r(
			x_0			
			)\cap \Omega)}} \\ & & 
	\le \int_{\rr^n}\eta_\varepsilon(y) \|D^\alpha f  \|_{L^p(B_r(
		x_0		
		-y)\cap \Omega)}  dy \le\phi(r)^{1/p} \| D^\alpha f\|_{M_p^{\phi,\delta}(\Omega)}
	\end{align*}
	because $B_r(x_0)\cap \Omega -y \subset B_r(x_0-y)\cap \Omega$ and $x_0-y \in \Omega$ for every $x_0 \in \Omega$ and $y \in \Gamma.$ This proves \eqref{epsbound}. Now combining \eqref{epsbound} with \eqref{Sbound2} we get the inequality 
	$ \| D^\alpha Tf_\varepsilon\|_{M_p^{\phi,\delta}(\rr^n)} \le C\sum_{|\beta|=|\alpha|}\| D^\beta f \|_{M_p^{\phi,\delta}(\Omega)}, $  
	for every $\varepsilon>0$ and every $|\alpha|\le l$, with $ C$ independent of $\varepsilon$. In particular, for every ball $B$ in $\rr^n$ of radius $r \in ]0,\delta[$ we have
	\begin{equation}
	\phi(r)^{-1/p}\| D^\alpha Tf_\varepsilon\|_{L^p(B)}   \le C\sum_{|\beta|=|\alpha|}\|D^\beta f \|_{M_p^{\phi,\delta}(\Omega)}. \label{balleps}
	\end{equation}
	Since $ Tf_\varepsilon$ converges to $Tf$ in $W^{l,p}(\rr^n)$, then  $D^\alpha Tf_\varepsilon$ converges to $D^\alpha Tf$ in $L^p(\rr^n)$ for every $|\alpha|\le l$ and as a consequence also in $L^p(B)$ for every ball $B$. Hence we can pass to the limit as $\varepsilon \to 0$ in \eqref{balleps} and obtain the estimate 
	$\phi(r)^{-1/p}\| D^\alpha Tf\|_{L^p(B)}    \le C \sum_{|\beta|= |\alpha|}\|D^\beta f \|_{M_p^{\phi,\delta}(\Omega)} $
	for every ball $B$ of radius $r$  and with $C$ depending only on $l,n$ and $M$. This concludes the proof.

\end{proof}

\begin{remark}\label{rotlip}
	Let $\Omega$ be a domain in $\rr^n$ and suppose that there exists a special Lipschitz domain $D$ with Lipschitz bound $M$ and a rotation $R$ of $\rr^n$ such that $R(D)=\Omega$.  We observe that we can use Theorem~\ref{defT} to define  an extension operator  $T$  from $W^{l,p}(\Omega)$ to $W^{l,p}(\rr^n)$. 
	Indeed, if $T_{D}$ denotes the extension operator provided by Theorem~\ref{defT} for the special Lipschitz domain  $D$, then 
	it suffices to set  $Tf=(T_{D}(f\circ R))\circ R^{-1}$  
	for all  $f \in W^{l,p}(\Omega)$, and   it is easy to verify  that $T$ is a linear continuous extension operator from $W^{l,p}(\Omega)$ to $W^{l,p}(\rr^n)$ the norm of which depends only on $l,n, M$.
\end{remark}

\section{Stein's operator on general Lipschitz domains}

In this section we consider the case of Lipschitz domains of general type. In \cite{stein} they are called domains with minimally smooth boundary, and they are defined as follows. Recall that by domain we mean a connected open set. 

\begin{definition}\label{minsmooth}
	Given a domain $\Omega$  in $\rr^n$  we say that the boundary  $\partial \Omega$ is minimally smooth if there exist  $\varepsilon >0$, $N \in \nn$, $M>0$ and a sequence $\{U_i\}_{i=1}^s$ (where $s$ can be $+\infty$) of open sets such that:
	\begin{enumerate}[i)]
		\item if $x \in \partial \Omega,$ then $B_\varepsilon(x) \subset U_i,$ for some $i$, where $B_\varepsilon(x)$ is the open ball centred in $x$ of radius $\varepsilon.$
		\item No point of $\rr^n$ is contained in more than $N$ elements of the family $\{U_i\}_{i=1}^s.$
		\item For every $i=1,...,s$ there exist a special  Lipschitz domain $D_i$ and a rotation $R_i$ of $\rr^n$ such that
		\[ U_i\cap \Omega = U_i \cap R_i(D_i).\]
		\item The Lipschitz bound of $D_i$ does not exceed $M$ for every $i$.
	\end{enumerate}
	In this case,  we also say\footnote{note that this extra terminology is not present in \cite{stein} and is introduced here for our convenience} that $\Omega$ is a domain with  minimally smooth boundary and parameters  $\varepsilon $, $N$, $M$, $\{U_i\}_{i=1}^s$. 
\end{definition}

We now give the outline of the construction of the Stein extension operator for a domain with minimally smooth boundary. The details of this construction and the proof of Theorem \ref{Eteor} can be found in \cite{stein}. In the sequel,  given a set $U$ in $\rr^n$ and  $\varepsilon >0$ we set $U_\varepsilon=\{ x 	\in U \ | \ B_\varepsilon(x) \subset U\}$. 

Let $\Omega$ be a domain in $\rr^n$ with minimally smooth boundary and parameters   $\varepsilon,N,M, \{U_i\}_{i=1}^s$. We can construct a sequence of real-valued functions $\{\lambda_i\}_{i=1}^s$ defined in $\rr^n$, such that  for every  $i=1,...,s$ we have $\supp \lambda_i \subset U_i$,  $-1\le \lambda_i\le 1$, $\lambda_i(x)=1$ for all $x \in 
U_i{_{\varepsilon/2}}$,  $\lambda_i$ is of class $C^\infty$, has bounded derivatives of all orders and the bounds of the derivatives of $\lambda_i$ can be taken to be independent of $i.$
We can also construct two real-valued functions $\Lambda_+,\Lambda_-$ defined in $\rr^n$,  such that 
$\supp \Lambda_+ \subset   \{ x \in \Omega \ | \ d(x,\partial \Omega)\le \varepsilon \}\cup \{ x \in \rr^n \ | \ d(x,\partial \Omega)\le \varepsilon /2\},$
$\supp \Lambda_- \subset \Omega,$
$|\Lambda_+|,|\Lambda_-|\le 1$
$\Lambda_++\Lambda_- =1 $ in $\bar \Omega,$
$\Lambda_+,\Lambda_-$ are of class $C^\infty(\rr^n)$ with bounded derivatives of all orders.

Consider now the extension operators $T_i:W^{l,p}(R_i(D_i))\to W^{l,p}(\rr^n)$, defined as in Remark \ref{rotlip}. We define the extension operator $T$ for $\Omega$ as follows
\begin{equation} 
Tf(x):= \Lambda_+(x)\frac{\sum_{i=1}^s\lambda_i(x)T_i(\lambda_if)(x)}{\sum_{i=1}^s \lambda^2_i(x)}+\Lambda_-(x)f(x). \label{defEE}
\end{equation}

Then we have the  following important theorem  proved in \cite{stein}. 

\begin{theorem}[Stein's Extension Theorem - general case]\label{Eteor}
	Let $1\le p\le \infty, l,n \in \nn$. Let $\Omega$ be a domain in $\rr^n$ having minimally smooth boundary. Then the operator $T$ defined in \eqref{defEE} is a linear continuous operator from $W^{l,p}(\Omega)$ to $W^{l,p}(\rr^n).$
\end{theorem}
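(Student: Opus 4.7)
The plan is to verify three things: (a) formula \eqref{defEE} defines a function on all of $\rr^n$, (b) $Tf|_\Omega = f$, and (c) the bound $\|Tf\|_{W^{l,p}(\rr^n)} \le C\|f\|_{W^{l,p}(\Omega)}$ with $C$ depending only on $l$, $n$, and the parameters $\varepsilon$, $N$, $M$ from Definition~\ref{minsmooth}.

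For (a) the only concern is division by $\sigma(x) := \sum_{i=1}^s \lambda_i^2(x)$. I would show $\sigma \ge c > 0$ wherever $\Lambda_+ \ne 0$, so the quotient in \eqref{defEE} can be declared zero off $\supp \Lambda_+$. The key point is that a point $x$ with $\Lambda_+(x) \ne 0$ lies within distance of order $\varepsilon$ of some $x' \in \partial \Omega$; by property (i) of Definition~\ref{minsmooth}, $B_\varepsilon(x') \subset U_i$ for some $i$, and after tuning constants one obtains $x \in U_{i,\varepsilon/2}$, where $\lambda_i(x) = 1$. Property (ii) limits the sum defining $\sigma$ to at most $N$ nonzero terms at each point, so the $i$-uniform bounds on the derivatives of the $\lambda_i$ propagate, via $\sigma \ge c$, to $i$-uniform bounds on the derivatives of $\sigma^{-1}$ on $\supp \Lambda_+$.

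For (b), take $x \in \Omega$ with $\lambda_i(x) \ne 0$; property (iii) gives $x \in U_i \cap \Omega = U_i \cap R_i(D_i) \subset R_i(D_i)$, so $T_i(\lambda_i f)(x) = \lambda_i(x) f(x)$. Summing, the numerator in \eqref{defEE} equals $f(x)\sigma(x)$, cancelling the denominator, and $\Lambda_+ + \Lambda_- = 1$ on $\bar{\Omega}$ then yields $Tf = f$ on $\Omega$. For (c) I would apply the Leibniz rule to $D^\alpha(Tf)$: each of the finitely many resulting terms is a bounded coefficient (derivatives of $\Lambda_\pm$, $\lambda_j$, or $\sigma^{-1}$) times $D^\beta(T_i(\lambda_i f))$ for some $i$ and $|\beta| \le |\alpha|$. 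Theorem~\ref{defT} combined with Remark~\ref{rotlip} gives $\|T_i(\lambda_i f)\|_{W^{l,p}(\rr^n)} \le S\|\lambda_i f\|_{W^{l,p}(R_i(D_i))}$ with $S = S(l,n,M)$, and a further Leibniz bound yields $\le C\|f\|_{W^{l,p}(\Omega \cap U_i)}$. Because pointwise at most $N$ indices $i$ contribute (property (ii)), raising to the $p$-th power and integrating gives $\|Tf\|^p_{W^{l,p}(\rr^n)} \le C' \sum_i \|f\|^p_{W^{l,p}(\Omega \cap U_i)} \le C' N \|f\|^p_{W^{l,p}(\Omega)}$, with the obvious supremum analogue when $p = \infty$.

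The principal difficulty is that $s$ may be infinite, so every constant along the chain must be truly $i$-independent; this is exactly what property (iv), the universality of $T_i$ through Theorem~\ref{defT} and Remark~\ref{rotlip}, and the uniform-in-$i$ construction of the $\lambda_i$ deliver. The remaining bookkeeping, namely the uniform control of $D^\alpha \sigma^{-1}$ from the lower bound on $\sigma$ via the higher-order chain rule, and the justification that derivatives pass termwise through the (possibly infinite) sum (which reduces locally to a finite sum by property (ii)), is routine.
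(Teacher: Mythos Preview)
The paper does not supply its own proof of this theorem: immediately before the statement it says that ``the details of this construction and the proof of Theorem~\ref{Eteor} can be found in \cite{stein}''. Your proposal is a correct outline of precisely that standard argument from Stein's book --- well-definedness via the lower bound on $\sigma=\sum_i\lambda_i^2$ on $\supp\Lambda_+$, the extension property via $T_i(\lambda_if)=\lambda_if$ on $U_i\cap\Omega$ together with $\Lambda_++\Lambda_-=1$ on $\bar\Omega$, and the $W^{l,p}$ bound via Leibniz, the uniform estimate for each $T_i$ from Theorem~\ref{defT}/Remark~\ref{rotlip}, and the finite-multiplicity property~(ii) --- so there is nothing substantive to compare against within the paper itself.
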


In order to prove that Stein's operator 
preserves Sobolev-Morrey spaces also in the general case,  we need to assume that the covering $\{ U_i\}_{i=1}^s$ in Definition~\ref{minsmooth} is a little more regular.  For this reason, we introduce the following natural definition.   

\begin{definition}
	Let $\Omega$ be a domain in $\rr^n$ with minimally smooth boundary and parameters $\varepsilon,M,N$, $\{ U_i\}_{i=1}^s.$ We say that $\{ U_i\}_{i=1}^s$ is a \textit{regular covering} for $\Omega$ if  for every $i=1,...,s.$, the open set   $U_i$ has the $\varepsilon$-ball property, i.e.,  if for every $ x\in U_i $ there exists an open ball $B$ of radius $\varepsilon$ contained in  $U_i$ such that $x \in B$. 
\end{definition}

The following lemma shows that   using regular coverings is not restrictive. 

\begin{lemma}
	Every domain in $\rr^n $ with minimally smooth boundary admits a regular covering.
\end{lemma}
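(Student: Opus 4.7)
The plan is to produce a regular covering by taking a morphological "opening" of each set $U_i$ in the original covering. Concretely, given parameters $\varepsilon, N, M$ and the covering $\{U_i\}_{i=1}^s$ associated with $\Omega$, for each $i$ I would define
\[
V_i := \bigcup_{x\in (U_i)_{\varepsilon/2}} B_{\varepsilon/2}(x),
\]
that is, the $\varepsilon/2$-neighborhood of the inner $\varepsilon/2$-core of $U_i$. The claim is that $\{V_i\}_{i=1}^s$ is a regular covering with parameters $\varepsilon/2,\, N,\, M$ (and the same rotations $R_i$ and special Lipschitz domains $D_i$).

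First I would record three trivial facts: (a) $V_i$ is open, being a union of open balls; (b) $V_i\subset U_i$, because every ball $B_{\varepsilon/2}(x)$ entering the union is contained in $U_i$ by definition of $(U_i)_{\varepsilon/2}$; and (c) $V_i$ enjoys the $\varepsilon/2$-ball property, since by construction every point $y\in V_i$ belongs to some ball $B_{\varepsilon/2}(x)\subset V_i$ with $x\in (U_i)_{\varepsilon/2}$. This is exactly the regularity condition with $\varepsilon$ replaced by $\varepsilon/2$.

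Next I would verify that the four defining conditions of a minimally smooth boundary carry over from $\{U_i\}$ to $\{V_i\}$ with the new parameter $\varepsilon/2$. For (i), let $x\in\partial\Omega$; choose $i$ with $B_\varepsilon(x)\subset U_i$. For any $z\in B_{\varepsilon/2}(x)$ one has $B_{\varepsilon/2}(z)\subset B_\varepsilon(x)\subset U_i$, so $z\in(U_i)_{\varepsilon/2}\subset V_i$; hence $B_{\varepsilon/2}(x)\subset V_i$. Condition (ii) follows from $V_i\subset U_i$, giving at most $N$-fold overlap. For (iii), $V_i\cap\Omega=V_i\cap U_i\cap\Omega=V_i\cap U_i\cap R_i(D_i)=V_i\cap R_i(D_i)$. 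Condition (iv) is untouched since the domains $D_i$ are reused verbatim.

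I do not anticipate a real obstacle; the only mildly delicate point is keeping the two roles of $\varepsilon/2$ straight, namely the erosion that ensures $V_i\subset U_i$ and the dilation that provides the ball property, and checking that their composition still engulfs a neighborhood of $\partial\Omega$ with a uniform radius. Both facts reduce to elementary inclusions among balls and Minkowski-type neighborhoods, so once the definition of $V_i$ is in place the verification is essentially a bookkeeping exercise.
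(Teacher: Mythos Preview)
Your proof is correct and follows essentially the same approach as the paper: replace each $U_i$ by a union of balls contained in $U_i$, which automatically yields the ball property while preserving $V_i\subset U_i$ and hence conditions (ii)--(iv). The only cosmetic difference is that the paper centers balls of radius $\varepsilon$ on $\partial\Omega$ (so the original parameter $\varepsilon$ is retained), whereas your morphological opening uses all inner points at the cost of halving $\varepsilon$.
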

\begin{proof}
	Let $\Omega$ be a domain in $\rr^n$ with minimally smooth boundary and parameters $\varepsilon,M,N, \{ U_i\}_{i=1}^s.$ Let 
	\[V_i:=\bigcup_{\substack{x \in \partial\Omega, \\ B_\varepsilon(x) \subset U_i}}B_\varepsilon(x) \]
	and consider the family $\{ V_i\}_{i=1}^{\widetilde s}$ containing the sets $V_i$ that are non-empty. Clearly $V_i$ has the $\varepsilon$-ball property for every $i=1,...,\widetilde s$. Moreover, it is immediate to verify that conditions i), ii), iii) and iv) in  Definition \ref{minsmooth} are satisfied with   $\{ U_i\}_{i=1}^s$ replaced by  $\{ V_i\}_{i=1}^{\widetilde s}$  and with the same constants $\varepsilon,M,N$. \end{proof}

Finally, we  prove that  operator $T$ defined in  \eqref{defEE}  using a regular covering preserves Sobolev-Morrey spaces. 

\begin{theorem}\label{mains}
	
	Let $1\le p<\infty $, $l\in {\mathbb{N}}$ and $\phi$ a function from $\rr^+$ to $\rr^+$. Let $\Omega$ be a domain in $\rr^n$ with minimally smooth boundary  and  parameters $\varepsilon,M,N$, $\{ U_i\}_{i=1}^s$, where  $\{ U_i\}_{i=1}^s$ is a regular covering for $\Omega$.  Let    $T$ be the operator defined in \eqref{defEE} using $\{ U_i\}_{i=1}^s$. Then 
	for every $\delta >0$ there exists $C>0$ such that  estimate \eqref{Sbound}  holds
	for all $f \in W^{l,p}(\Omega)$ and  $\alpha \in \nn_0^n$ with $|\alpha|\le l$.  Constant $C$ depends only on   $n, \varepsilon, l, M,N, \delta $ and on 
	the $L^{\infty}$-norms of the derivatives up to order $l$ of the  functions $\lambda_i$, $i=1,\dots , s$, $\Lambda^+$, $\Lambda^{-}$ appearing  in \eqref{defEE}. Moreover, if in addition $\Omega$ is bounded then $C$ 
	can be taken to be independent of $\delta$. 
\end{theorem}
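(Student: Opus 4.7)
The plan is to reduce the general minimally-smooth case to the special Lipschitz case already handled by Theorem~\ref{Tlemma}, exploiting the partition-of-unity structure of formula \eqref{defEE}.

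I would first rewrite $Tf$ in the cleaner form
\[
Tf=\sum_{i=1}^s \mu_i\, T_i(\lambda_i f)+\Lambda_- f,\qquad \mu_i:=\frac{\Lambda_+\lambda_i}{\sum_{j=1}^s \lambda_j^2}.
\]
Since $\sum_j\lambda_j^2$ has at most $N$ nonzero summands at each point and is bounded below by a positive constant on $\supp\Lambda_+$ (this is built into Stein's construction), each $\mu_i$ belongs to $C^\infty(\rr^n)$, is supported in $U_i$, and has derivatives of every order bounded uniformly in $i$ by quantities depending only on $N$ and the $L^\infty$-norms of the derivatives of $\lambda_i$ and $\Lambda_\pm$. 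Applying the Leibniz rule together with the elementary inequality $|\sum_i a_i|^p\le N^{p-1}\sum_i|a_i|^p$ (valid here because each $\mu_i$ vanishes outside $U_i$ and at most $N$ of the $U_i$'s contain any given point, by ii) of Definition~\ref{minsmooth}), I obtain the pointwise bound
\[
|D^\alpha Tf(x)|^p\le C\sum_{\beta\le\alpha}\sum_{i=1}^s \chi_{U_i}(x)\,|D^\beta T_i(\lambda_i f)(x)|^p+C|D^\alpha(\Lambda_- f)(x)|^p,
\]
where $C$ depends only on $l,n,N$ and the sup norms of the cutoff derivatives.

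Fix now an arbitrary ball $B_r(x_0)\subset\rr^n$ with $r<\delta$ and integrate. Since Morrey norms are rotation-invariant, Theorem~\ref{Tlemma} applied to the special Lipschitz domain $D_i$ (via Remark~\ref{rotlip}) gives, for each $i$ with $U_i\cap B_r(x_0)\neq\emptyset$,
\[
\int_{B_r(x_0)}\!\!\chi_{U_i}|D^\beta T_i(\lambda_i f)|^p\le\phi(r)\|D^\beta T_i(\lambda_i f)\|_{M_p^{\phi,\delta}(\rr^n)}^p\le C\phi(r)\!\sum_{\gamma\le\beta}\|D^\gamma f\|_{M_p^{\phi,\delta}(\Omega)}^p,
\]
the second inequality following by expanding $D^\gamma(\lambda_i f)$ through Leibniz, using the uniform derivative bounds on $\lambda_i$ and the identity $U_i\cap R_i(D_i)=U_i\cap\Omega$ (so the restriction to the special Lipschitz domain coincides with the part of $\Omega$ seen by the cutoff). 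The term $D^\alpha(\Lambda_- f)$ is controlled analogously and directly by $\phi(r)\sum_\gamma\|D^\gamma f\|_{M_p^{\phi,\delta}(\Omega)}^p$.

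The crucial step — and the main obstacle — is to control the cardinality $K(B_r(x_0)):=\#\{i:U_i\cap B_r(x_0)\ne\emptyset\}$, since without such a bound the sum over $i$ of per-patch Morrey estimates is meaningless when $s=+\infty$. This is exactly where the regular covering hypothesis is designed to help: by the $\varepsilon$-ball property, for every $i$ meeting $B_r(x_0)$ one produces a Euclidean ball $B_\varepsilon(z_i)\subset U_i$ with $|z_i-x_0|<r+\varepsilon$, hence $B_\varepsilon(z_i)\subset B_{r+2\varepsilon}(x_0)$. A volume comparison combined with the bounded overlap condition ii) yields
\[
K(B_r(x_0))\le N\Bigl(\frac{r}{\varepsilon}+2\Bigr)^n\le N\Bigl(\frac{\delta}{\varepsilon}+2\Bigr)^n.
\]
Combining this count with the per-patch estimates, dividing by $\phi(r)$, and passing to the supremum over $x_0$ and $r<\delta$ yields \eqref{Sbound} with a constant depending only on the parameters listed in the statement. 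For the bounded case, compactness of $\partial\Omega$ allows taking $s<\infty$ (and in fact bounded in terms of $\mathrm{diam}\,\Omega,\varepsilon,n,N$), so the crude bound $K(B)\le s$ replaces the $\delta$-dependent estimate; equivalently, the regime of $r$ comparable to $\mathrm{diam}\,\Omega$ can be handled by the plain $L^p$-bound from Theorem~\ref{Eteor} together with a fixed finite covering of $\Omega$ by balls of controlled radius, giving a constant independent of $\delta$.
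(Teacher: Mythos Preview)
Your proposal is correct and follows the same strategy as the paper: bound the number of patches $U_i$ meeting a fixed ball $B_r(x_0)$, then apply Theorem~\ref{Tlemma} patchwise through the partition-of-unity structure of \eqref{defEE} and the Leibniz rule. Your volume-comparison count $K(B_r(x_0))\le N(r/\varepsilon+2)^n$ is in fact a cleaner variant of the paper's argument for the unbounded case (which covers $B$ by $\varepsilon$-balls and uses a two-step pigeonhole via Lemma~\ref{covering}), and your treatment of the bounded case matches the paper's observation that $s$ itself is finite and controlled by $N|\Omega^\varepsilon|/(\varepsilon^n\omega_n)$.
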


\begin{proof}
	Let $\delta >0$ and let  $B$ an open ball in $\rr^n$  of radius $r$ with  $0<r<\delta$.  Let  $J=\{i \in \{1,...,s\} \ | \ B\cap U_i \neq \emptyset\}$. 
	
	{\bf Claim.} The cardinality  $\#J$ of $J$ satisfies $\#J\le \xi$, where $\xi$ is a constant  depending only on $n, \varepsilon,N,\delta$; moreover, if in addition $\Omega$ is bounded $\xi$ is independent of $\delta$. 
	
	We consider first the case when $\Omega$ is bounded. Then also its $\varepsilon$-neighbour\-hood $\Omega^\varepsilon=\{x \in \rr^n \,|\, d(x,\Omega)<\varepsilon\}$ is bounded. Moreover, by definition $U_i\cap \Omega^\varepsilon$ contains a ball of radius $\varepsilon$, hence $|U_i\cap \Omega^\varepsilon|>\varepsilon^n\omega_n,$ where $\omega_n$ is the volume of the $n$-dimensional unit ball. Since the covering $\{U_i\}_{i=1}^s$ has multiplicity less than $N$ and $U_i\cap \Omega^\varepsilon \subset\Omega^\varepsilon$, we have that $\sum_{i=1}^s|U_i\cap \Omega^\varepsilon|\le N |\Omega^\varepsilon|$. This implies that $s\le N |\Omega^\varepsilon|/(\varepsilon^n\omega_n)$, hence in particular $\#J\le N |\Omega^\varepsilon|/(\varepsilon^n\omega_n)=\xi.$ We observe that in this case $\xi$ 
		does not
	depend on $\delta.$
	
	We consider now the case when $\Omega$ is unbounded. Since the diameter of $B$ is less than $2\delta$, by Lemma \ref{covering} there exists a family of $m$ balls centered in $B$ of radius $\varepsilon$ that covers $B$, where $m$ depends only on $\delta,\varepsilon$ and $n$. Suppose now that $\#J>mp$, for some integer $p\in \nn$. Then at least one of these balls has non-empty intersection with at least $p+1$ elements of the family  $\{U_i\}_{i=1}^s$. Let's call this ball $B_\varepsilon$ and denote by $c_{B_\varepsilon}$ its center. Thus there exist points $x_i$, $i=1,...,p+1$, with $x_i \in B_\varepsilon \cap U_i.$ Since each $U_i$ has the $\varepsilon$-ball property, there are $B_i$, $i=1,...,p+1$, open balls of radius $\varepsilon$ with $B_i \subset U_i$ and $x_i \in B_i.$ We denote by  $c_i$ the centre of the ball $B_i$ and we notice that the set $\{c_1,...,c_{p+1}\}$ is contained in the ball of center $c_{B_\varepsilon}$ of radius $2\varepsilon.$ Indeed $|x_i-c_i|\le \varepsilon$ and $x_i \in B_\varepsilon,$ for every $i.$ Therefore by Lemma \ref{covering} we can cover the set $\{c_1,...,c_{p+1}\}$ with $q$ open balls of radius $\varepsilon/2$, where $q$ depends only on $n.$ Now suppose that $p>qN$, then at least one of these balls, that we label $B_{\varepsilon/2}$, contains at least $N+1$ points of the set $\{c_1,...,c_{p+1}\}.$ Without loss of generality we can suppose that they are $c_1,...,c_{N+1}$.  Then we must have that $B_1\cap B_2 \cap ... \cap B_{N+1}\neq \emptyset$ because each of these balls contains the center of $B_{\varepsilon/2}.$ However, since $B_i \subset U_i$ this is in contrast with property ii) of Definition \ref{minsmooth}. Thus,  if $\#J\ge mp$ then $p\le qN,$ hence $\#J<m(Nq+1) $ and the claim is proved.  
	
	We remark that the the value of the constant $C$ that will appear along the rest of the proof may vary, but it will remain dependent only on:  $n,M,l $ and on the $L^{\infty}$-norms of the derivatives up to order $l$ of the  functions $\lambda_i$, $i=1,\dots , s$, $\Lambda^+$, $\Lambda^{-}$.
	
	We can proceed with the proof of the theorem in the case $|\alpha|=0.$ Let $f \in W^{l,p}(\Omega)$.  By  applying the definition of $Tf$  we get
	
	\begin{align*}
	\left( \frac{1}{\phi(r)} \int_B |Tf(x)|^pdx \right)^\frac{1}{p} \le & \left( \frac{1}{\phi(r)} \int_{ B  \cap \Omega^c} \left|\Lambda_+(x) \frac{\sum_{i=1}^s \lambda_i(x)T_i(f\lambda_i)(x)}{\sum_{i=1}^s \lambda_i^2(x)}\right|^pdx \right)^\frac{1}{p} \\
	&\ \  \qquad\qquad\qquad\quad+\left( \frac{1}{\phi(r)} \int_{B \cap \Omega} | f(x) |^pdx \right)^\frac{1}{p}.
	\end{align*}
	The second integral can be estimated as follows
	\begin{equation}\label{ezbound}
	\phi(r)^{-1/p}\| f\|_{L^p(B\cap\Omega)}   \le\sum_{j=1}^m  \phi(r)^{-1/p}\| f\|_{L^p(B_j\cap\Omega)}  \le m\|f\|_{M_p^{\phi,\delta}(\Omega)} 
	\end{equation}
	where $B_1,...,B_m$ is a collection of balls of radius $r<\delta$ and centers in $\Omega$ with $m$ depending only on $n.$ To estimate the first integral we will use that $\sum_{i=1}^s \lambda_i^2\ge 1$ on  $\supp \Lambda_+ \cap \Omega^c$ and that $\supp \lambda_i \subset U_i$ for all $i=1,\dots ,s$. Moreover, we recall that exist  rotations $R_i$ and special Lipschitz domains $D_i$ such that $U_i\cap \Omega=U_i\cap R_i(D_i)$. We have
	
	\begin{align*}\lefteqn{
		\left( \frac{1}{\phi(r)} \int_{ B  \cap \Omega^c} \left|\Lambda_+(x) \frac{\sum_{i=1}^s \lambda_i(x)T_i(f\lambda_i)(x)}{\sum_{i=1}^s \lambda_i^2(x)}\right|^pdx \right)^\frac{1}{p} }\\
	&\qquad \le  \sum_{i\in J}  \left( \frac{1}{\phi(r)} \int_{ B  \cap \Omega^c} |T_i(f\lambda_i)(x)|^pdx \right)^\frac{1}{p} \le \sum_{i\in J} \| T_i(f\lambda_i)\|_{M_p^{\phi,\delta}(\rr^n)} \\
	&\qquad  \le C\sum_{i\in J} \| f\lambda_i\|_{M_p^{\phi,\delta}(R_i(D_i))} \le C \sum_{i\in J} \| f\|_{M_p^{\phi,\delta}(R_i(D_i)\cap U_i)}\\
	&\qquad =C\sum_{i\in J} \| f\|_{M_p^{\phi,\delta}(\Omega \cap U_i)} \le C\xi\| f\|_{M_p^{\phi,\delta}(\Omega )}.
	\end{align*}
	Here we have used inequality \eqref{Sbound} for $T_i$. This combined with \eqref{ezbound} implies the validity of \eqref{Sbound} when $|\alpha|=0.$
	
	We prove now \eqref{Sbound} when $|\alpha|>0.$  By the Leibniz rule we have  that for all $x \in B$
	\[ |D^\alpha Tf(x)|\le C\sum_{i \in J} \sum_{\beta\le \alpha} |D^\beta T_i(f\lambda_i)(x)| \chi_{ \Omega^c}(x)+ C\sum_{\beta\le \alpha} |D^\beta f(x)|\chi_{\Omega}(x)  \]
	where $C$ is a positive constant depending only on $\alpha,n$ and on the upper  bound of the derivatives  up to order $|\alpha|$ of the functions $\lambda_i$, $i=1,\dots , s$, $\Lambda^+$, $\Lambda^{-}$. 
	Hence 
	\begin{align*}
	\phi(r)^{-1/p}\| D^\alpha Tf\|_{L^p(B\cap\Omega)}  &\le C\sum_{i\in J}\sum_{\beta\le \alpha}  \phi(r)^{-1/p}\| D^\beta T_i(f\lambda_i)\|_{L^p(B\cap\Omega^c)}  \\
	&+ C\sum_{\beta\le \alpha} \phi(r)^{-1/p}\| D^\beta f\|_{L^p(B\cap\Omega)}.
	\end{align*}
	Arguing as before we can estimate the second  term  as follows
	\begin{equation}
	\sum_{\beta\le \alpha}  \phi(r)^{-1/p}\|D^\beta f\|_{L^p(B\cap\Omega)}  \le  m\sum_{\beta\le \alpha}  \| D^\beta f\|_{M_p^{\phi,\delta}(\Omega)}. \label{ezbound2}
	\end{equation}
	We can  also estimate the first  term  using inequality \eqref{Sbound} for $T_i$. In particular we get
	\begin{align*}  
	&\sum_{i\in J}\sum_{\beta\le \alpha}  \left( \frac{1}{\phi(r)} \int_{ B \cap \Omega^c} |D^\beta T_i(f\lambda_i)(x)|^pdx \right)^\frac{1}{p} \\ 
	&  \le C\sum_{i\in J}\sum_{\substack{\beta\le \alpha \\   |\gamma|\le |\beta|}}           \| D^\gamma (\lambda_if)\|_{M_p^{\phi,\delta}(R_i(D_i))} 
	\le C\sum_{i\in J}\sum_{\substack{\beta\le \alpha \\   |\gamma|\le |\beta|}} \| D^\gamma f\|_{M_p^{\phi,\delta}(R_i(D_i)\cap U_i)} \\
	& = C\sum_{i\in J}
	\sum_{\substack{\beta\le \alpha \\   |\gamma|\le |\beta|}}     \| D^\gamma f\|_{M_p^{\phi,\delta}(\Omega\cap U_i)}     \le C\sum_{i\in J} \sum_{|\beta|\le |\alpha|}\| D^\beta f\|_{M_p^{\phi,\delta}(\Omega)}   \\
	& \le C\xi\sum_{|\beta|\le |\alpha|}\| D^\beta f\|_{M_p^{\phi,\delta}(\Omega)}, \addtag \label{hardbound}
	\end{align*}   
	where $ C$ is a constant depending only on   $n,M,l $ and on the $L^{\infty}$-norms of the derivatives up to order $l$ of the  functions $\lambda_i$, $i=1,\dots , s$, $\Lambda^+$, $\Lambda^{-}$. Inequality \eqref{hardbound} together with \eqref{ezbound2} gives \eqref{Sbound} for $|\alpha|>0.$ We finally observe that in the proof of \eqref{Sbound} the only constant possibly depending on $\delta$ is $\xi$, but we  have also proved in the Claim above that  if $\Omega$ is bounded then  $\xi$ 
		does not 
	actually depend on $\delta$.  This completes the proof of the theorem. 
\end{proof}

\vspace{5pt}

{\bf Acknowledgments.} This paper represents a part of a dissertation written at the University of Padova by the second author under the guidance of the first author. 
 The  first author is also a member of the Gruppo Nazionale per l'Analisi Ma\-te\-ma\-ti\-ca, la Probabilit\`{a} e le loro Applicazioni (GNAMPA) of the
Istituto Nazionale di Alta Matematica (INdAM).
 This research was also supported by the INDAM - GNAMPA project 2017 ``Equazioni alle derivate parziali non lineari e disuguaglianze funzionali: aspetti geometrici ed analitici".

$ $ \vspace{4pt}

{\small

\noindent Pier Domenico  Lamberti\\
Department of Mathematics ``Tullio Levi-Civita''\\
University of Padova\\
Via Trieste 63\\
I-35121 Padova, Italy\\
E-mail address: lamberti@math.unipd.it\\

\noindent Ivan Yuri Violo \\
SISSA - Scuola Internazionale Superiore di Studi Avanzati\\
Via Bonomea 265\\ 
I-34136 Trieste, Italy\\
E-mail address: iviolo@sissa.it \\

}

\end{document}